\newcommand\sbullet{\mbox{\tiny @}}
\def\ov{\overline}
\def\wt{\widetilde}
\def\DX{C_X}
\def\ot{\cdot}
\def\ba{\bar{a}}
\def\Set{{\mathsf{Set}}}
\def\Par{{\mathsf{Par}}}
\def\Pos{{\mathsf{Pos}}}
\def\Top{{\mathsf{Top}}}
\def\Vec{{\mathsf{Vec}}}
\def\JSL{{\mathsf{JSL}}}
\def\Par{{\mathsf{Par}}}
\def\Gra{{\mathsf{Gra}}}
\def\op{\mathrm{op}}
\def\Str{\mathsf{Str}}
\def\Un{\mathsf{Un}}
\def\pr{\prime}
\theoremstyle{plain}
\newtheorem{theorem}{Theorem}[section]
\newtheorem{proposition}[theorem]{Proposition}
\newtheorem{corollary}[theorem]{Corollary}
\newtheorem{lemma}[theorem]{Lemma}
\theoremstyle{definition}
\newtheorem{definition}[theorem]{Definition}
\newtheorem{example}[theorem]{Example}
\newtheorem{examples}[theorem]{Examples}
\newtheorem{remark}[theorem]{Remark}
\newtheorem{notation}[theorem]{Notation}
\newtheorem{observation}[theorem]{Observation}
\newtheorem{noname}[theorem]{}
\def\2{\textbf{2}}
\def\aa{\ast\ast}
\def\id{\mathrm{id}}
\def\Id{{\mbox{Id}}}
\def\Set{{\mathsf{Set}}}
\def\Pos{{\mathsf{Pos}}}
\def\Top{{\mathsf{Top}}}
\def\N{{\rm I\kern-.20em N}}
\def\R{{\rm I\kern-.17em R}}
\def\Z{{\rm Z\kern-.32em Z}}
\newcommand{\gl}{\lambda}
\newcommand\A{\mathcal {A}}
\newcommand\ca{\mathcal {A}}
\newcommand\X{\mathcal {X}}
\newcommand\cf{\mathcal{F}}
\newcommand\cu{\mathcal{U}}
\newcommand\cg{\mathcal{G}}
\newcommand\ch{\mathcal{H}}
\newcommand\cp{\mathcal {P}}
\newcommand\ck{\mathcal {K}}
\newcommand{\TT}{\mathbb{T}}
\newcommand{\sfp}{f\hspace*{-0.5mm}p}
\begin{document}

\title[D-Ultrafilters and their Monads]
{D-Ultrafilters and their Monads}

\author{Ji\v r\' {\i} Ad\'amek}
\address{Department of Mathematics, Faculty of Electrical Engineering\\ Czech Technical University in Prague, Czech Republic
\& Institute of Theoretical Computer Science, Technical University of Braunschweig, Germany}
\author{Lurdes Sousa}
\address{ University of Coimbra, CMUC, Department of Mathematics, Portugal  \& Polytechnic Institute of Viseu, ESTGV, Portugal}
\thanks{J.~Ad\'amek was supported by the Grant Agency of the Czech Republic under the grant 19-00902S. \\ 
L.~Sousa was partially supported   by the Centre for Mathematics of the University of Coimbra - UIDB/00324/2020, funded by the Portuguese Government through FCT/MCTES}

\keywords{Codensity monad, cogenerator, ultrafilter, locally finitely presentable category}
\subjclass{}

\begin{abstract} 
 For a number of  locally finitely presentable categories $\ck$ we describe the codensity monad of the full embedding of all finitely presentable objects into $\ck$. We introduce the concept of $D$-ultrafilter on an object, where $D$ is a ``nice'' cogenerator of $\ck$. We prove that the codensity monad assigns to every object an object representing all $D$-ultrafilters on it. Our result covers e.g. categories of sets, vector spaces, posets, semilattices, graphs and $M$-sets for finite commutative monoids $M$. 
\end{abstract}

\maketitle

\section{Introduction}

We present a generalization of the concept of an ultrafilter on a set: for a number of categories $\ck$ we define $D$-ultrafilters on an object of $\ck$. Here $D$ is a cogenerator of $\ck$ with a special property; we speak about $\ast$-cogenerators, see 
below. For example $D=\{0,1\}$ is a $\ast$-cogenerator of $\Set$, in this case  $D$-ultrafilters are the usual ultrafilters. By a classical result of Kennison and Gildenhuys \cite{KG} the ultrafilter monad on $\Set$ (assigning to every set the set of all ultrafilters) is the codensity monad of the embedding $\Set_{\sfp}\hookrightarrow \Set$ of finite sets. We will prove that, in general, the corresponding monad of $D$-ultrafilters on $\ck$ is the codensity monad of the embedding $\ck_{\sfp}\hookrightarrow \ck$ of finitely presentable objects of $\ck$.

 	We consider closed monoidal  categories.  Our examples include all commutative  varieties, for instance, vector spaces,  semilattices or $M$-sets for finite commutative monoids $M$. Recall that a variety of algebras is closed monoidal  with respect to the usual tensor product if and only if  it is commutative (aka entropic), see \cite{BN}.
 Another sort of examples are cartesian closed categories such as posets or graphs. 

All of our examples (except the last section presenting some generalizations) are locally finitely presentable categories in the sense of Gabriel and Ulmer \cite{GU}. One of the most important features of locally finitely presentable  categories  $\ck$ is that the full embedding   
$$E:\ck_{\sfp}\hookrightarrow \ck$$ 
of all finitely presentable  objects  is dense, i.e.  every object   $X$ is a canonical colimit of all morphisms  $a:A\to X$ with $A$ finitely presentable. More precisely: the forgetful functor $\ck_{\sfp}\,/ X\to \ck$ of the coslice category  has colimit $X$ with the canonical colimit cocone.

Not surprisingly, finitely presentable  objects  are usually \textit{not} codense. A measure of how ``far away'' a functor $E$ is from being codense is the {codensity monad }$\TT$ of $E$. This monad is given  by the right Kan extension of $E$ along itself:
$$T=\mbox{Ran}_EE,$$
see below. For codense functors $E$, this is the trivial monad $\mbox{Id}$.

 Recently, Leinster proved that the codensity monad  of the embedding of finite-dimensional vector  spaces into the category  $K$-$\Vec$ of vector spaces over a field $K$ is the \textit{double-dualization monad  }
$$TX=X^{\aa}.$$
And he asked for a general description of  the codensity monad of $E:\ck_{\sfp}\hookrightarrow \ck$  for locally finitely presentable categories  $\ck$.

The purpose of our paper is to answer to Leinster's question. Not for general locally finitely presentable  categories,   but for quite some. Given a cogenerator $D$ we denote by $(-)^{\ast}=[-,D]$  the contravariant endofunctor $X\mapsto [X,D]$; then $D$ is a \textit{$\ast$-cogenerator} if for every object $X$ the dual object $X^{\ast}$ is a canonical colimit of objects  $A^{\ast}$ with $A$ finitely presentable. We prove that in all our examples the given cogenerator $D$  is a $\ast$-cogenerator.  The composite $(-)^{\aa}$ of $(-)^{\ast}$ with itself is the well-known double-dualization monad (relative to $D$).

We introduce the concept of a $D$-ultrafilter on an object   $X$ and form the corresponding $D$-ultrafilter monad on $\ck$ as a submonad of the double-dualization monad. This turns out to be the desired  codensity monad  of $E: \ck_{\sfp}\hookrightarrow \ck$. Example: in the category  of posets the 2-chain is a $\ast$-cogenerator. Here $X^{\ast}$ is the poset of all $\uparrow$-sets of $X$, ordered by inclusion. Therefore $X^{\aa}$ is the poset of all upwards closed collections $\mathcal{W}$ of $\uparrow$-sets, again ordered by inclusion. A $D$-ultrafilter on $X$ is such a nonempty collection $\mathcal{W}$ which is

\vskip1mm

(i) closed under finite intersections,

\noindent and

(ii) \textit{prime}, i.e., it does not contain  $\emptyset$ and if it contains $R\cup S$, then it contains $R$ or $S$.

\vskip1mm

\noindent This is analogous to the classical ultrafilters on sets, which are nonempty, upwards closed, prime collections of subsets,  closed under finite intersections.

Analogously in all examples that our result covers: the codensity monad  $\mathbb{T}$ assigns to every object   $X$ an object   formed by all $D$-ultrafilters on $X$,   and there is a close analogy between the latter and the classical ultrafilters. Moreover, we prove that  $\TT$ is also the enriched codensity monad of the embedding $E$.

\vspace{3mm}

\textbf{On codensity monads.} Recall that for every functor $E:\ca\to \ck$ the codensity monad  is defined as the right Kan extension along itself,  $T=\mathrm{Ran}_E E$.   That is, $T$ is an endofunctor endowed with a natural transformation $\tau: TE\to E$ universal among natural transformations from $(-)\cdot E$ to $E$. Applying the universal property to $\id: \Id \cdot E\to E$ we get a unique natural transformation $\eta:\Id\to T$. And applying it to $\tau \cdot T\tau: TTE\to E$ we get a unique natural transformation $\mu:TT\to T$. Then $(T,\eta, \mu)$ is a monad, see \cite{L}.

If $\ca$ (like $\ck_{\sfp}$ above) is an essentially small full subcategory of a complete category $\ck$, then the codensity monad   of the embedding $E:\ca\to \ck$ is obtained by the following \textit{limit formula}: for every object   $X$ denote by 
$$\DX:X/\ca\to \ck$$
 the functor 
assigning to every arrow $a:X\to A$ the codomain $A$, and put
$$TX=\lim \DX.$$
We have a limit cone denoted by $\, \psi_a:TX\to A\,$ for $(A, a)\in X/\ca$. On  morphisms $f:X\to Y$,   $Tf$  is defined as follows: there exists a unique morphism $Tf:TX\to TY$ with 
$$\psi_a\ot Tf=\psi_{a\ot f}\qquad \text{for all $a:Y\to A$ in $Y/\ca$.}$$
The unit
 $\eta_X^T: X\to TX$  is the unique morphism given by 
 $$\psi_a\ot \eta_X^T=a\qquad \text{for all $a:X\to A$ in $X/\ca$}$$
 and the multiplication is defined by the following commutative triangles
\begin{equation*}
\qquad \qquad \xymatrix{TTX\ar[rr]^{\mu^T_X}\ar[rd]_{\psi_{\psi_a}}&&TX\ar[ld]^{\psi_a}\\
&A&}   \qquad \text{for all $a:X\to A$ in $X/\ca$.}
\end{equation*}

\vspace{3mm}

\textbf{Related work.} As mentioned already, our paper was inspired by that of Leinster \cite{L}. A related topic was discussed in the PhD thesis of Barry-Patrick Devlin \cite{D}. He also aimed to describe codensity monads of embeddings of ``finite-objects'', and  he also introduced a concept of ultrafilter on an object.  However his thesis is fundamentally  disjoint from our paper. For example, the categories  he works with are varieties whose monads contain that of abelian groups as a submonad -- the only example on our list above with this property is $K$-$\Vec$, see  Example \ref{E3.6} below.

\vspace{3mm}

\textbf{Acknowledgement.}  We are grateful to the referee for a number of excellent suggestions, in particular, for the proof of Proposition \ref{P2.6} unifying our (previously disparate) examples in \ref{E-star}.  We also thank Ji\v r\'{\i} Velebil for helpful discussions on  enriched codensity monads.

\section{$\ast$-cogenerators}

Throughout  we work with a symmetric monoidal closed category  $(\ck, \otimes, I)$ with a specified object   $D$.

The functor $[-,D]:\ck \to \ck^{\op}$ is denoted by $(-)^{\ast}$. Since it is left adjoint to its dual, we obtain a monad $(-)^{\aa}$ on $\ck$ given by
 $$X^{\aa}=[[X,D],D]$$
  called the \textit{ double-dualization monad}. Its unit 
  $$\eta_X:X\to [[X,D], D]$$
  is the transpose of the evaluation map $[X,D]\otimes X\to D$ precomposed with the symmetry isomorphism $X\otimes [X,D]\xrightarrow{\cong}[X,D]\otimes X$. Its multiplication is given by $\mu_X=\eta^{\ast}_{X^{\ast}}:X^{\aa\aa}\to X^{\aa}$.
  
  \begin{remark}\label{newR2.1} $(-)^{\ast}$ can be described on  morphisms $f:X\to Y$ as the unique morphism $f^{\ast}:Y^{\ast}\to X^{\ast}$ for which the square below commutes:
  $$\xymatrix{%
  X\otimes Y^{\ast}\ar[d]_{f\otimes Y^{\ast}}\ar[r]^{X\otimes f^{\ast}}&X\otimes X^{\ast}\ar[d]^{\text{ev}}\\
  Y\otimes Y^{\ast}\ar[r]_{\text{ev}}&D}%
  $$
  \end{remark}
  
  \begin{examples}\label{E2.1}Most of our examples are \textit{commutative varieties}  of finitary algebras. Recall that a variety $\ck$ is called commutative (or entropic) if for each of its $n$-ary operation symbols $\sigma$ and every algebra $K\in \ck$ we have a homomorphism $\sigma_K:K ^n \to K$. Let $|-|$ denote the forgetful functor  into $\Set$. Every variety is symmetric monoidal w.r.t. the usual tensor product:
  $$\mbox{$A\otimes B$ represents bimorphisms from $|A|\times |B|$}$$
  and the unit
$$\mbox{$I$ is the free algebra on one generator.}$$
As proved by Banaschewski and Nelson \cite{BN}, $\ck$ is a monoidal closed category iff it is a commutative variety. Then, for arbitrary objects  $A$ and $B$, all morphisms  in $\ck(A,B)$ form a subalgebra of the power $B^{|A|}$ which yields the object  $[A,B]$. Another equivalent formulation, as observed by Linton \cite{Li}, is that the monad on $\Set$ associated with $\ck$ is commutative in Kock's sense \cite{K}.

Here are our leading examples of commutative varieties with a specified cogenerator $D$. Observe that in each case all finite powers $D^n$ of $D$ are finitely presentable algebras.
\begin{enumerate}[label=(\alph*)]
 \item $\Set$ with $D=\{0,1\}$.
 Here $(-)^{\ast}$ is the contravariant power-set functor $\cp$, thus $X^{\aa}= \cp\cp X$ consists of all collections of subsets of $X$. For a function $f:X\to Y$ the function $f^{\aa}$ takes a collection $\cu\subseteq \cp X$ to 
 $$f^{\aa}(\cu)=\{R\subseteq Y\mid f^{-1}(R)\in \cu\}.$$ 
 And  $\eta_X$ assigns every element $x$ of $X$ the trivial ultrafilter $\eta_X(x)=\{R\subseteq X \mid x\in R\}$.
 \item\label{par}
 $\Par$, the category of sets and partial functions, with $D=\{1\}$. This is completely analogous, $X^{\aa}= \cp\cp X$.
 \item\label{kvec}
 $K$-$\Vec$, the category  of vector spaces over a field $K$ chosen as the cogenerator $D$. This example was the motivation for our notation: $X^{\ast}$  is the usual dual space (of all linear forms on $X$). Thus $X^{\aa}$ is the double-dual.
 For a linear function $f:X\to Y$ , the function $f^{\aa}$ assigns to every $a:X^{\ast}\to K$ in $X^{\aa}$ the element $a\ot f^{\ast}:Y^{\ast}\to K$ of $Y^{\aa}$. And $\eta_X: X\to X^{\aa}$ assigns to $x\in X$ the evaluation-at-$x$ of linear forms.
\item\label{jsl}
$\JSL$, the category  of join-semilattices (i.e., posets with finite joins) and homomorphisms, with $D=\2$, the chain $0<1$. Observe that homomorphisms preserve 0, the join of $\emptyset$.
Given a semilattice $X$, every  homomorphism $f:X\to \2$ defines a subset of $X$ by $f^{-1}(1)$. This is an $\uparrow$-set which is \textit{ prime}, i.e.,  it does not contain  $0$ and whenever it contains $x_1\vee x_2$, then it contains $x_1$ or $x_2$. Conversely, every prime $\uparrow$-set $R$ of $X$ defines a homomorphism $f_R:X\to \2$ by $f_R(x)=1$ iff $x\in R$. We can thus identify 
$$X^{\ast}= \text{ all prime $\uparrow$-sets of $X$}$$
ordered by inclusion. (The least element of $X^{\ast}$ is $\emptyset$.) Consequently,
$$X^{\aa}=\text{ all prime upwards closed collections of prime $\uparrow$-sets of $X$.}$$
Here a collection is called prime if it does not contain the empty set and whenever it contains $R_1\cup R_2$, then it contains $R_1$ or $R_2$. $X^{\aa}$ is also ordered by inclusion. Its smallest element is the empty collection.

  \item\label{mset} $M$-$\Set$, the category  of sets with an action of a monoid $M$. We assume that $M$ is commutative (so that $M$-$\Set$ is a commutative variety) and finite.
  We need the latter assumption to have a finitely presentable cogenerator. Recall that an $M$-set is a set $X$ equipped with a function from $M\times X$ to $X$ (notation: $(m,x)\mapsto mx$) such that the corresponding map from $M$ to $\Set(X,X)$ is a monoid homomorphism. Homomorphisms $f:X\to Y$ of $M$-sets, called \textit{equivariant maps}, are functions satisfying $f(mx)=mf(x)$.  A cogenerator of $M$-$\Set$ is 
  the power-set
  $$D=\cp M,$$
with the monoid action
$$mR=\{x\in  M\, |\,  mx \in R\} \quad \text{for $R\subseteq M,\; m\in M$.}$$
To see that this is indeed a cogenerator, observe that  equivariant maps  $g:X\to \cp M$  correspond bijectively to subsets (not just subalgebras!) of $X$: to every subset $Y\subseteq X$ assign $g_Y:X\to \cp M$ defined by 
$$g_Y(x)= \{ m\in M \mid mx\in Y\}\; \, \text{for all $x\in X$.}$$
The inverse assignment takes  every 
$g:X\to \cp M$ to $Y=\{mx\mid  x\in X,\, m\in g(x)\}$.


Thus for every $M$-set $X$ we conclude that
$$X^{\ast}=\cp X$$
is the power-set of the (underlying set of) $X$ with the monoid action $mY=\{x\in X\mid mx\in Y\}$. And the monoid action of
$X^{\aa}=\cp\cp X$
assigns  to $\cu\subseteq \cp X$ and $m\in M$ the result $m\cu=\{Y\subseteq X\mid mY\in \cu\}$.
\end{enumerate}
\end{examples}

\begin{examples}\label{E2.2}
Further we consider some cartesian closed categories with a cogenerator $D$. 
\begin{enumerate}[label=(\alph*)]
\item\label{pos}
 $\Pos$, the category of posets and monotone maps, with $D=\2$, the chain $0<1$. Here $[A,B]=\Pos(A,B)$ ordered pointwise. Thus, analogously to $\JSL$ above, 
 $$X^{\ast}=\text{ all $\uparrow$-sets of $X$}$$
 (ordered by inclusion) and
 $$X^{\aa}=\text{ all upwards closed collections of $\uparrow$-sets,}$$
 also ordered by inclusion.
\item\label{gra}
$\Gra$, the category  of undirected graphs and homomorphisms. Thus an object $A$ consists of a set $V_A$ of vertices and a symmetric relation $E_A\subseteq V_A\times V_A$ of edges. In case $E_A=V_A\times V_A$ we speak about the \textit{complete graph} on $V_A$. $\Gra$ has a cogenerator $D$, the complete graph on $\{0,1\}$.  Given graphs $A$ and $B$, then the hom-object
$$[A,B]=\Set(V_A,V_B)$$
consists of all functions, not only homomorphisms, and its edges are defined pointwise: they are all pairs of functions $(f,g)$ with 
\begin{equation}\label{gra1}(a,a')\in E_A\Rightarrow (f(a),g(a'))\in E_B, \text{ for all $a,a'\in V_A$}.\end{equation}
 Observe that loops of $[A,B]$ are precisely the homomorphisms from $A$ to $B$:
\begin{equation}\label{gra2}(f,f)\in E_{[A,B]}  \text{ iff  $f:A\to B$ is in $\Gra$}.\end{equation}
We conclude that
 $$X^{\ast}=\text{ complete graph on $\cp V_X$}$$
and
 $$X^{\aa}=\text{ complete graph on $\cp\cp V_X$.}$$


 \item
 $\Sigma$-$\Str$, the category of relational structures, where $\Sigma$ is a  signature of finitely many finitary symbols. (We allow only finitely many symbols to make sure that the terminal object is finitely presentable.) Objects $X$, $\Sigma$-structures, consist of a set $V_X$ and an $n$-ary relation $\sigma_X\subseteq V_X^n$ for every $\sigma \in \Sigma$ $n$-ary. Analogously to \ref{gra} we choose as $D$ the complete structure on $\{0,1\}$, that is, $\sigma_D=\{0,1\}^n$ for every $n$-ary symbol $\sigma$. Then
 $$X^{\aa}=\text{ complete $\Sigma$-structure on $\cp \cp V_X$.}$$
\end{enumerate}
\end{examples}

\begin{remark}\label{R2.3}
(1) Recall that a full subcategory $\ch$  of $\ck$ is  {\em dense}
 if the functor
$$E_{\ch}:\ck\to [\ch^{\op}, \Set], \;\;\; K\mapsto \big(\ck(-,K):\ch^{\op}\to \Set\big),$$
is fully faithful. In other words, every object $K$ is a canonical colimit of the diagram $\ch/K\to \ck$ given by $(H\xrightarrow{h} K)\mapsto H$.

(2) As explained in the introduction, we want to describe the codensity monad of the full embedding
$$\ck_{\sfp}\hookrightarrow \ck$$
of the subcategory of finitely presentable objects.  Recall that in case $\ck$ is locally finitely presentable, $\ck_{\sfp}$ is dense.
We are, however,  not assuming that $\ck$ is locally finitely presentable. Instead, we need that every object  of the form  $X^{\ast}$ is a canonical colimit of all $A^{\ast}$ with $A$ finitely presentable.
\end{remark}

\begin{notation}\label{N2.4}Recall that, for every object   $X$, the diagram  $C_X:X/\ck_{\sfp}\to \ck$ assigns to $a:X\to A$ with $A\in \ck_{\sfp}$ the codomain.  We denote the composite $(-)^{\ast}\cdot\big( C_X\big)^{\op}$ by $C_X^{\ast}$. That is, 
$$\qquad \qquad C^{\ast}_X:(X/\ck_{\sfp})^{\op}\to \ck\quad \text{with }\; \;  C_X^{\ast}(A,a)=A^{\ast}.$$
\end{notation}

\begin{definition}\label{D2.5}
An object   $D$ is called a \textit{$\ast$-object} provided that for all objects  $X$ we have $X^{\ast}=\mbox{colim} C^{\ast}_X$ with the canonical colimit cocone $a^{\ast}:A^{\ast}\to X^{\ast}$. If $D$ is a cogenerator, we speak about \textit{$\ast$-cogenerator}.
\end{definition}

The next proposition implies, as we demonstrate below, that in all the above examples $D$ is a $\ast$-cogenerator.

\begin{proposition}\label{P2.6} An object $D$  is  a $\ast$-object  whenever $\ck$ has a full, dense subcategory $\ch$ whose objects have finitely presentable duals w.r.t. $D$.
\end{proposition}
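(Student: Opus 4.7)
The plan is to show directly that the canonical cocone $(a^{\ast}:A^{\ast}\to X^{\ast})_{(A,a)\in X/\ck_{\sfp}}$ exhibits $X^{\ast}$ as $\mathrm{colim}\,C_X^{\ast}$. The main tool is the adjunction $(-)^{\ast}\dashv (-)^{\ast}$ coming from symmetric monoidal closedness, which provides a natural isomorphism $\ck(H,X^{\ast})\cong\ck(X,H^{\ast})$, together with the identity $\tilde h^{\ast}\cdot\eta_H=h$ for every transpose $\tilde h:X\to H^{\ast}$ of $h:H\to X^{\ast}$. The hypothesis that $H^{\ast}$ be finitely presentable for each $H\in\ch$ means that every $(H,h)\in\ch/X^{\ast}$ yields an object $(H^{\ast},\tilde h)\in X/\ck_{\sfp}$, so we can feed $\ch$-data into any cocone over $C_X^{\ast}$.

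Given a cocone $(\phi_a:A^{\ast}\to Y)_{(A,a)\in X/\ck_{\sfp}}$, I first define, for each $h:H\to X^{\ast}$ with $H\in\ch$,
$$\psi_h \;=\; \phi_{\tilde h}\cdot \eta_H : H\to Y.$$
Using that transposition turns postcomposition with $g:H'\to H$ into precomposition with $g^{\ast}:H^{\ast}\to H'^{\ast}$, i.e.\ $\widetilde{h\cdot g}=g^{\ast}\cdot\tilde h$, the cocone law applied to the morphism $g^{\ast}:(H^{\ast},\tilde h)\to(H'^{\ast},\widetilde{h\cdot g})$ in $X/\ck_{\sfp}$ combined with naturality of $\eta$ shows that $(\psi_h)_{(H,h)\in\ch/X^{\ast}}$ is a cocone on the canonical diagram $\ch/X^{\ast}\to\ck$. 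Since $\ch$ is dense, that diagram has colimit $X^{\ast}$, and so produces a unique $\phi:X^{\ast}\to Y$ with $\phi\cdot h=\psi_h$ for all $h:H\to X^{\ast}$ with $H\in\ch$.

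The heart of the argument is then verifying that $\phi\cdot a^{\ast}=\phi_a$ for \emph{every} $(A,a)\in X/\ck_{\sfp}$, not only for those arising from $\ch$. Here I use density of $\ch$ at $A^{\ast}$: it suffices to check equality after precomposing with an arbitrary $k:K\to A^{\ast}$, $K\in\ch$. Naturality of the adjunction gives $\widetilde{a^{\ast}\cdot k}=\tilde k\cdot a$, so $\phi\cdot a^{\ast}\cdot k=\psi_{a^{\ast}\cdot k}=\phi_{\tilde k\cdot a}\cdot\eta_K$, while cocone compatibility along $\tilde k:(A,a)\to(K^{\ast},\tilde k\cdot a)$ yields $\phi_a\cdot(\tilde k)^{\ast}=\phi_{\tilde k\cdot a}$, and the triangle identity $(\tilde k)^{\ast}\cdot\eta_K=k$ then delivers $\phi_a\cdot k$ on both sides. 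Uniqueness of $\phi$ follows immediately by density at $X^{\ast}$: any two morphisms $X^{\ast}\to Y$ satisfying $\phi\cdot a^{\ast}=\phi_a$ must in particular agree on every $h=\tilde h^{\ast}\cdot\eta_H:H\to X^{\ast}$ with $H\in\ch$, and hence coincide. The main obstacle throughout is purely bookkeeping with the contravariant adjunction, making sure that each invocation of ``transpose'' lands in the correct naturality square; nothing requires $X$ itself to be presented by finitely presentable objects, which is crucial since we are not assuming $\ck$ to be locally finitely presentable.
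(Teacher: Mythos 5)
Your proof is correct and follows essentially the same route as the paper's: transpose each $h:H\to X^{\ast}$ with $H\in\ch$ to an object $(H^{\ast},\tilde h)$ of $X/\ck_{\sfp}$, build the induced cocone $\phi_{\tilde h}\cdot\eta_H$ on $\ch/X^{\ast}$, obtain $\phi$ by density, and verify $\phi\cdot a^{\ast}=\phi_a$ (and uniqueness) by probing with $\ch$-objects and the triangle identity $(\tilde k)^{\ast}\cdot\eta_K=k$. The only difference is presentational: the paper establishes uniqueness first by showing the cocone $(a^{\ast})$ is collectively epic, whereas you deduce it at the end from the same factorization $h=(\tilde h)^{\ast}\cdot\eta_H$.
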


\begin{proof}
Let $X$ be an arbitrary object   and suppose that a cocone of $C^{\ast}_X$ with codomain $Z$ is given as follows
$$\begin{tabular}{cc}
$X\xrightarrow{a}A$\\
\hline \\ [-2.5ex]
$A^{\ast}\xrightarrow{\ba}Z$
\end{tabular}
\qquad \text{ for $(a,A)\in X/\ck_{\sfp}$.}$$
We are to prove that there exists a unique morphism  $f$  making the following triangles 
$$\xymatrix{&A^{\ast}\ar[ld]_{a^{\ast}}\ar[rd]^{\ba}&\\X^{\ast}\ar[rr]_f&&Z} \qquad \begin{array}{c}\\(a,A)\in X/\ck_{\sfp}\end{array}$$
commutative. 

(1) Uniqueness. We prove that the cocone of all $a^{\ast}$ is collectively epic. Since $E_{\ch}$ is fully faithful (see Remark \ref{R2.3}(1)), it is sufficient to prove this about   the cocone of all $E_{\ch}a^{\ast}$. That is, for every object $H$ in $\ch$ and every morphism $h:H\to X^{\ast}$ there exists $(A,a)$ in $X/\ck_{\sfp} $ such that $h=a^{\ast}\cdot k$ for some $k: H\to A^{\ast}$. In fact, put $A=H^{\ast}$ (lying in $\ck_{\sfp}$) and $a=h^{\sbullet}$  where we denote by 
$$h^{\sbullet}=h^{\ast}\cdot
\eta_X:X\to H^{\ast}$$
the adjoint transpose. Since  it fulfils $(h^{\sbullet})^{\ast}\cdot \eta_H=h$, for $k=\eta_H$ we get the desired equality.

(2) Existence. Since  $\ch$ is dense in $\ck$, $X^{\ast}$ is the canonical colimit of the  diagram $\ch/X^{\ast}\to \ck$ taking every $H\xrightarrow{h}X^{\ast}$ to $H$. Applying $(-)^{\ast}$ to the   colimit cocone and composing with $\eta_X$, we obtain the cone
$$X\xrightarrow{h^{\sbullet}}H^{\ast},\; h\in \ch/X^{\ast}$$
which is a subcone of $C_X:X/\ck_{\sfp}\to \ck$,    since all $H^{\ast}$ lie in $\ck_{\sfp}$. For each $h$ we form $\ov{h^{\sbullet}}:H^{\aa}\to Z$.
 The
following assignment defines a cocone of the diagram $\ch/X^{\ast}\to \ck$:
$$\begin{array}{c}H\xrightarrow{h}X^{\ast}\\
\hline
\vspace*{-4mm}
\\
H\xrightarrow{\eta_H}H^{\aa}\xrightarrow{\overline{h^{\sbullet}}}Z \\
\end{array}$$
Indeed, for every connecting morphism
$$\xymatrix{H\ar[rr]^{u}\ar[dr]_{h}&&K\ar[dl]^{k}\\
&X^{\ast}&} \qquad (H,K\in \ch)$$
we have a morphism $u^{\ast}:k^{\sbullet}\to h^{\sbullet}$ in $X/\ck_{\sfp}$, therefore the triangle below commutes:
$$\xymatrix@=1.2em{H^{\aa}\ar[ddrrr]_{\overline{h^{\sbullet}}}\ar[rrrrrr]^{u^{\aa}}&&&&&&K^{\aa}\ar[llldd]^{\overline{k^{\sbullet}}}\\
&&H\ar@{.>}[llu]_{\eta_H}\ar@{.>}[rr]_u&&K\ar@{.>}[rru]^{\eta_K}&&\\
&&&Z&&&}$$
This yields the desired equality 
$$\overline{h^{\sbullet}}\cdot \eta_H=\left(\overline{k^{\sbullet}}\cdot \eta_K\right)\cdot u$$
since $\eta$ is natural.

We thus have a unique morphism
$$f:X^{\ast}\to Z \; \; \text{ with }\;  f\cdot h=\overline{h^{\sbullet}}\cdot \eta_H$$
for all $(H,h)$ in $\ch/X^{\ast}$. We are going to prove that this implies $f\cdot a^{\ast}=\overline{a}$ for all $a:X\to A$, $A\in \ck_{\sfp}$. Since $\ch$ is dense, it is sufficient to prove, for all morphisms $k:H\to A^{\ast}$ with $H\in \ch$, that
$$f\cdot a^{\ast}\cdot k=\overline{a}\cdot k.$$
Apply  $f\cdot h=\overline{h^{\sbullet}}\cdot \eta_H$ to $h=a^{\ast}\cdot k:H\to X^{\ast}$. It is clear that $h^{\sbullet}=k^{\sbullet}\cdot a$, thus, we get a morphism
of $X/\ck_{\sfp}$. Therefore
$$\overline{h^{\sbullet}}=\overline{a}\cdot \big(k^{\sbullet}\big)^{\ast}.$$
This yields the desired result:
$$\overline{a}\cdot k=\overline{a}\cdot \big(k^{\sbullet}\big)^{\ast}\cdot \eta_H=\overline{h^{\sbullet}}\cdot \eta_H=f\cdot h=f\cdot a^{\ast}\cdot k.$$
\end{proof}

\begin{examples}\label{E-star} (a) In a commutative variety every object $D$ with all $D^n$ finitely presentable ($n\in \mathbb{N}$) is a $\ast$-object. Indeed, let $\ch$ be the full dense subcategory of all free algebras on $n$ generators for $n\in \mathbb{N}$.

(b) Every finite poset is a $\ast$-object of $\Pos$. Here we use the density of the 2-element chain.

(c) Every finite graph is a  $\ast$-object of $\Gra$. We can take as $\ch$ the dense subcategory consisting of the singleton discrete graph and the single-edge graph.

(d) Every finite $\Sigma$-structure is a  $\ast$-object of $\Sigma$-$\Str$. Here we use $\ch$ consisting of (a) a discrete one-element structure and (b) for each symbol $\sigma \in \Sigma$ of arity $n$  the $n$-element structure $A$ on $\{0,...n-1\}$ with all relations but $\sigma_A$ empty, and $\sigma_A$ containing just one n-tuple $(0,...,n-1)$.

\end{examples}

 Observe that in all examples of \ref{E2.1} and \ref{E2.2} the unit object $I$ is finitely presentable. For commutative varieties, where $I$ is the free algebra on one generator, this is automatic. In the cartesian closed categories $\Pos$ and $\Gra$ this also holds. For $\Sigma$-$\Str$ the terminal object $I=1$ is finitely presentable since we assume that $\Sigma$ is finite.

\vspace*{0.5mm}
\begin{proposition}\label{P2.new} All $\ast$-objects are finitely presentable, assuming that $I$ is.
\end{proposition}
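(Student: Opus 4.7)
My strategy is to show that $D$ is a retract of some finitely presentable object of $\ck$; since a retract of a finitely presentable object is itself finitely presentable, this will suffice.

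The first move is to apply the $\ast$-object property not to $X=I$ (where $(I,\mathrm{id}_I)$ is terminal in $(I/\ck_{\sfp})^{\op}$ and so the colimit is trivially $D$), but to $X=D$ itself, obtaining the canonical colimit
$$
D^{\ast}=\mathrm{colim}_{(a:D\to A)\in D/\ck_{\sfp}}A^{\ast}
$$
with cocone $a^{\ast}:A^{\ast}\to D^{\ast}$. Via the self-duality $\ck(X,Y^{\ast})\cong\ck(Y,X^{\ast})$ coming from the symmetric monoidal closed structure, combined with the identification $I^{\ast}\cong D$ from the unit axiom, the identity $\mathrm{id}_{D}\in\ck(D,D)=\ck(D,I^{\ast})$ corresponds to the unit $\eta_{I}:I\to I^{\ast\ast}=D^{\ast}$ of the double-dualization monad.

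The crux of the argument is to factor $\eta_{I}$ as $I\xrightarrow{h}A^{\ast}\xrightarrow{a^{\ast}}D^{\ast}$ for some $(A,a)\in D/\ck_{\sfp}$. Since $I$ is finitely presentable by hypothesis, $\ck(I,-)$ preserves filtered colimits, and hence such a factorization exists as soon as the indexing category $(D/\ck_{\sfp})^{\op}$ is filtered, equivalently $D/\ck_{\sfp}$ is cofiltered. This in turn follows from $\ck_{\sfp}$ being closed under binary products and equalizers (true in all the running examples of \ref{E2.1} and \ref{E2.2}, where $\ck_{\sfp}$ consists of the finite objects). Once $h$ is in hand, its image $g:A\to D$ under the self-duality $\ck(I,A^{\ast})\cong\ck(A,I^{\ast})=\ck(A,D)$ translates the equation $\eta_{I}=a^{\ast}\circ h$ back into $\mathrm{id}_{D}=g\circ a$, splitting $D$ off $A\in\ck_{\sfp}$.

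The step I expect to be the main obstacle is the justification of filteredness of the indexing category (or, equivalently, that $\ck(I,-)$ preserves the canonical colimit displaying $D^{\ast}$). In the generic symmetric monoidal closed setting the required closure of $\ck_{\sfp}$ under finite limits is not automatic, so should this route fail one would want to replace the filteredness step by a direct manipulation of the universal property of the colimit at $X=D$—perhaps by combining it with its trivial instance at $X=I$, where the terminal $(I,\mathrm{id}_{I})\in(I/\ck_{\sfp})^{\op}$ already exhibits $\mathrm{id}_{D}$ as one of the cocone components.
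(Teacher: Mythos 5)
Your proposal follows essentially the same route as the paper: apply the $\ast$-object property at $X=D$, use finite presentability of $I$ to factor the global element of $[D,D]$ naming $\mathrm{id}_D$ (the transpose of $\rho_D$) through a single colimit leg $a^{\ast}$, and transpose back to exhibit $D$ as a retract of some $A\in\ck_{\sfp}$; your appeal to naturality of the self-duality adjunction is just a compressed form of the paper's explicit diagram chases with $\rho$ and evaluation. The filteredness worry you flag is not handled differently in the paper either—it simply asserts that $D^{\ast}$ is a filtered colimit of the $A^{\ast}$ over $(D/\ck_{\sfp})^{\op}$ and factors through one colimit map on that basis—so your argument is correct exactly to the extent the paper's own proof is.
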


\begin{proof}
Denote by $\rho_A:A\otimes I\to A$ the right-unit isomorphism. 
If $D$ is a $\ast$-object, then $D^{\ast}$ itself is a filtered colimit of the diagram $(D/\ck_{\sfp})^{\op}\to \ck$ with the colimit cocone $a^{\ast}: A^{\ast}\to D^{\ast}$. The transpose $\widehat{\rho_D}:I\to [D,D]$ of $\rho_D:D\otimes I \to D$ factorizes, since $I$ is finitely presentable, through one of the colimit maps $a^{\ast}$. The factorizing morphism from $I$ to $[A,D]$ is a transpose $\widehat{u\ot \rho_A}:A\otimes I\to D$ for a morphism  $u:A\to D$:
$$\xymatrix{I\ar[rr]^{\widehat{\rho_D}}\ar[dr]_{\widehat{u\ot \rho_A}}&&[D,D]\\
&[A,D]\ar[ru]_ {a^{\ast}=[a,D]}&}$$
We obtain a commutative triangle by multiplying  the above one with $D$:
$$\xymatrix{D\otimes I\ar[rrrr]^{D\otimes\widehat{\rho_D}}
\ar@<-2ex>[dddrr]_{D\otimes \widehat{u\ot \rho_A}}\ar[drr]_{\rho_D}&&&&D\otimes [D,D]\ar[dll]^{\text{ev}}\\&&D&&\\&&A\otimes [A,D]\ar[u]^{\text{ev}}&&\\
&&\stackrel{~}{\qquad \qquad \qquad \qquad D\otimes [A,D]\qquad \qquad \qquad \qquad } \ar[u]^{a\otimes [A,D]}\ar@<-2ex>[rruuu]_ {D\otimes a^{\ast}}&&}$$
Moreover, the upper triangle commutes by the definition of transpose, and the right-hand one does by Remark \ref{newR2.1}. Consequently,  the left-hand triangle also commutes. Consider the following diagram, using the above triangle in its left-hand part:
$$\xymatrix{D\ar[rrrr]^{a}\ar@/_50pt/[dddrr]_{\id}&&&&A\ar@/^50pt/[llddd]^u\\
&D\otimes I\ar[lu]_{\rho_D}\ar[rr]^{a\otimes I}\ar[d]_{D\otimes \widehat{u\rho_A}}&&A\otimes I\ar[d]^{A\otimes\widehat{u\rho_A}}\ar[ru]^{\rho_A}&\\
&D\otimes [A,D]\ar[rr]^{a\otimes [A,D]}&&A\otimes [A,D]\ar[dl]^{\text{ev}}&\\
&&D&&}$$
The right-hand part commutes by the definition of transpose, the upper part by naturality of $\rho$, and the middle square commutes since both passages yield $a\otimes \widehat{u\ot\rho_A}$. This proves $u\ot a=\id$. Thus $D$ is a split quotient of $A\in \ck_{\sfp}$, concluding the proof.
\end{proof}

\begin{example}\label{E-PN}  A commutative variety with a cogenerator does not have to  possess a $\ast$-cogenerator.  An example is the variety 
$$\Un$$
 of unary algebras on one operation. This is equivalent to $\mathbb{N}$-$\Set$, the category of sets with the action of   the additive monoid $\mathbb{N}$ of natural numbers. It has a cogenerator analogous to that of Example \ref{E2.1}\ref{mset}: take $\cp \mathbb{N}$ with the unary operation  sending $V\subseteq \mathbb{N}$ to $\{n-1 \mid n\in V, n\not=0\}$.

Assuming that $\Un$ has a $\ast$-cogenerator $D$, we derive a contradiction as follows:

 The operation of $D$ forms some cycles, and since $D$ is by Proposition \ref{P2.new} finitely generated, there exists a prime $n$ such that all cycles of $D$ have lengths smaller than $n$. But then $D$ is not a cogenerator: if $A$ is an algebra consisting of a cycle of length $n$, there exists no non-constant homomorphism from $A$  to $D$.
\end{example}

\begin{proposition}\label{P2.8} For every cogenerator $D$ the unit of the double-dualization monad is monic.
\end{proposition}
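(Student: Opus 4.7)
The plan is to reduce monicity of $\eta_X:X\to X^{\aa}$ to two ingredients: the cogenerator property of $D$ together with the fact that $\eta_D$ is itself monic. Given $f,g:Y\to X$ with $\eta_X\cdot f=\eta_X\cdot g$, I would first invoke naturality of $\eta$: for every $h:X\to D$ one has $\eta_D\cdot h=h^{\aa}\cdot\eta_X$, whence
$$\eta_D\cdot h\cdot f = h^{\aa}\cdot\eta_X\cdot f = h^{\aa}\cdot\eta_X\cdot g = \eta_D\cdot h\cdot g.$$
Assuming $\eta_D$ monic, this yields $h\cdot f=h\cdot g$ for every $h:X\to D$, and the cogenerator property of $D$ then forces $f=g$.

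It thus remains to show $\eta_D$ is monic. For this I would use that the contravariant functor $(-)^{\ast}:\ck\to\ck^{\op}$ is left adjoint to its own opposite $(-)^{\ast}:\ck^{\op}\to\ck$, with both unit and counit (read in $\ck$) given by the $\eta$ of the double-dualization monad. The triangle identity amounts to the equality
$$(\eta_Y)^{\ast}\cdot\eta_{Y^{\ast}}=\id_{Y^{\ast}}\qquad \text{for every } Y\in\ck,$$
so $\eta_{Y^{\ast}}$ is a split monomorphism for every $Y$. Applying this with $Y=I$, and noting that the right-unit isomorphism $\rho_D:D\otimes I\to D$ transposes to an isomorphism $D\cong [I,D]=I^{\ast}$, we conclude that $\eta_D$ is isomorphic to $\eta_{I^{\ast}}$, hence split monic, and in particular monic.

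The only non-routine step is the identification of $\eta_D$ as split monic via $D\cong I^{\ast}$; everything else is a straightforward diagram chase using naturality of $\eta$ together with the defining property of a cogenerator. Note that this argument uses nothing special about $\ast$-cogenerators or finite presentability, so the result applies in the general symmetric monoidal closed setting of this section.
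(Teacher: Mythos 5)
Your proof is correct, and while the second half of your argument reaches the paper's intermediate goal by a different route, the overall structure matches: the reduction of monicity of $\eta_X$ to monicity of $\eta_D$ via naturality of $\eta$ and the cogenerator property of $D$ is exactly step (2) of the paper's proof. Where you differ is in establishing that $\eta_D$ is monic. The paper argues concretely inside the monoidal structure: it exhibits the transpose $i:I\to D^{\ast}$ of the left-unit isomorphism, chases evaluation squares to show that $i\otimes\eta_D$ is a split monomorphism, and then cancels the tensoring using $I\otimes-\cong\Id_{\ck}$. You instead exploit the adjunction $(-)^{\ast}\dashv\bigl((-)^{\ast}\bigr)^{\op}$ already recorded at the start of Section 2: since both unit and counit are given by $\eta$, the triangle identity $(\eta_Y)^{\ast}\cdot\eta_{Y^{\ast}}=\id_{Y^{\ast}}$ makes every $\eta_{Y^{\ast}}$ split monic, and the canonical isomorphism $D\cong I^{\ast}$ (which the paper itself uses later, in Remark \ref{D-e}) transports this to $\eta_D$. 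Your route is shorter and more conceptual, avoids the explicit evaluation-diagram chase, and yields slightly more, namely that $\eta_Z$ is split monic for every dual object $Z=Y^{\ast}$, in particular for $D$; the paper's computation is essentially an unpacking of the same triangle identity in the special case $Y=I$, so nothing is lost, but your packaging makes the mechanism visible. Both arguments use only the symmetric monoidal closed structure and the cogenerator hypothesis, so the scope of the statement is the same.
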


\begin{proof} (1) $\eta_D$ is monic. Indeed, by definition, $\eta_D$ is the transpose of the composite
$$D\otimes D^{\ast}\xrightarrow{s}D^{\ast}\otimes D\xrightarrow{\mathrm{ev}}D$$
where $s$ is the symmetry. Thus we have a commutative triangle as follows:
$$\xymatrix{D^{\ast}\otimes D\ar[rr]^{\mathrm{ev}}\ar[d]_{s^{-1}}&&D\\
D\otimes D^{\ast}\ar[d]_{\eta_D\otimes D^{\ast}}&&\\
D^{\aa}\otimes D^{\ast}\ar[uurr]_{\mathrm{ev}}}$$
Denote by $i:I\to D^{\ast}$ the transpose of the left-unit isomorphism $\lambda_D:I\otimes D\to D$:
$$\xymatrix{I\otimes D\ar[r]^{\lambda_D}\ar[d]_{i\otimes D}&D\\
D^{\ast}\otimes D\ar[ur]_{\mathrm{ev}}}$$
Thus the following diagram commutes (due to naturality of $s$):
$$\xymatrix{I\otimes D\ar[rr]^{\lambda_D}\ar[d]_{i\otimes D}&&D\\
D^{\ast}\otimes D\ar[d]_{D^{\ast}\otimes \eta_D}&&\\
D^{\ast}\otimes D^{\aa}\ar[d]_{s^{-1}}&&\\
D^{\aa}\otimes D^{\ast}\ar[uuurr]_{\mathrm{ev}}}$$
Therefore, $i\otimes \eta_D$ is a split monomorphism (with splitting $\lambda_D^{-1}\ot \mathrm{ev}\ot s^{-1}$).

Consequently, given morphisms  $u_1, \, u_2: Y\to D$ with $\eta_D\ot u_1=\eta_D\ot u_2$, then $I\otimes u_1=I\otimes u_2$ (since $i\otimes \eta_D$ merges that last pair) which proves $u_1=u_2$, because $I\otimes - \cong \Id_{\ck}$.

(2) For every object   $X$ the morphism  $\eta_X$ is monic. Indeed, given $u_1,\, u_2: Y\to X$ with $u_1\not=u_2$, there exists, since $D$ is a cogenerator, a morphism  $f:X\to D$ with $f\ot u_1\not=f\ot u_2$.  Hence, by (1), $\eta_D\ot f\ot u_1\not= \eta_D \ot f\ot u_2$. The following commutative diagrams
$$\xymatrix{Y\ar[r]^{u_i}&X\ar[r]^{\eta_X}\ar[d]_{f}&X^{\aa}\ar[d]^{f^{\aa}}\\
&D\ar[r]_{\eta_D}&D^{\aa}} \qquad \quad  \begin{array}{l}\\ \\
(i=1,2)
\end{array}$$
prove $\eta_X\ot u_1\not=\eta_X\ot u_2$.
\end{proof}

\section{$D$-ultrafilters}

We assume in this section that a finitely presentable cogenerator $D$ in a symmetric monoidal category  $\ck$ with preimages is given. Recall from Proposition \ref{P2.8} that each $\eta_A:A\to A^{\aa}$ is monic. 

\begin{definition}\label{D:derived} (1) Given a morphism  $a:X\to A$ with $A$ finitely presentable, we call the preimage of $\eta_A$ under $a^{\aa}: X^{\aa}\to A^{\aa}$ the \textit{derived subobject} of $a$. We use the following notation for the corresponding pullback:
\begin{equation}\label{derived}\xymatrix{A'\; \ar[d]_{p(a)}\ar@{>->}[r]^{a'}&X^{\aa}\ar[d]^{a^{\aa}}\\
A\; \ar@{>->}[r]_{\eta_A}&A^{\aa}}\end{equation}
(2)
If $\ck$ is a concrete category over $\Set$,  a  \textit{ $D$-ultrafilter} on an object $X$ is an element of the underlying set of the intersection of all derived subobjects of $X^{\aa}$.
\end{definition}

\begin{example}\label{E3.2} In $\Set$ with $D=\{0,1\}$, this is precisely an ultrafilter on $X$. Recall that an ultrafilter is  a nonempty collection $\cu$ of  subsets that is upwards closed, closed under finite intersections and \textit{ prime} (i.e., $\emptyset\not\in \cu$ and if $\cu$ contains $R\cup S$ then it contains $R$ or $S$).

Why do ultrafilters and $\{0,1\}$-ultrafilters coincide? Recall that $\eta_A(t)$ is the collection  of all $Z\subseteq A$ with $t\in Z$. And $a^{\aa}$ takes every collection of subsets to the collection of their preimages under $a$. Thus, a collection of subsets $\cu$ lies in the derived subobject $A'$ iff there exists $t\in A$ such that 
\begin{equation}\label{eq3.1}
a^{-1}(Z)\in \cu \quad \text{ iff } \quad t\in Z \qquad \; \text{ (for all $Z\subseteq A$).}
\end{equation}
We are going to prove that this holds iff $\cu$ is an ultrafilter. This can be derived from the result of Galvin and Horn \cite{GH} which states that $\cu$ is an ultrafilter iff for every finite disjoint decomposition of $X$ precisely one member lies in $\cu$. We provide a full (short) proof since we need modifications of it below.

\end{example}

\begin{lemma}\label{L3.3} Let $\ck=\Set$ with $D=\{0,1\}$. Then a $D$-ultrafilter on a set is precisely an ultrafilter on it.
\end{lemma}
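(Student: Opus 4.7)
The plan is to unfold the $D$-ultrafilter condition via the biconditional in (3.1): since $\ck_{\sfp}$ is the category of finite sets, $\cu \subseteq \cp X$ is a $D$-ultrafilter exactly when for every function $a : X \to A$ with finite $A$ there exists a (necessarily unique) $t \in A$ such that $a^{-1}(Z) \in \cu \Leftrightarrow t \in Z$ for all $Z \subseteq A$. Once this reformulation is in place, both directions of the equivalence ``$\cu$ ultrafilter $\Leftrightarrow$ $\cu$ $D$-ultrafilter'' reduce to well-chosen finite quotients of $X$.

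For the direction ``ultrafilter $\Rightarrow$ $D$-ultrafilter'', I would invoke the standard fact that an ultrafilter meets every finite partition of $X$ in exactly one cell. Applied to the fibre partition $\{a^{-1}(\{s\})\}_{s \in A}$, this yields a unique $t \in A$ with $a^{-1}(\{t\}) \in \cu$. For arbitrary $Z \subseteq A$ one then writes $a^{-1}(Z) = \bigcup_{s \in Z} a^{-1}(\{s\})$ and reads off $a^{-1}(Z) \in \cu \Leftrightarrow t \in Z$ directly from upward closure (if $t \in Z$) and primeness (if $t \notin Z$).

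For the converse I would check each ultrafilter axiom by applying the unfolded condition to a quotient tailored to that axiom. Nontriviality ($X \in \cu$, $\emptyset \notin \cu$) follows from the constant map to $A = \{0\}$. Upward closure under $R \subseteq S$ follows from the map $a : X \to \{0,1,2\}$ separating $X \setminus S$, $S \setminus R$ and $R$: the hypothesis $R \in \cu$ forces $t$ to code the cell $R$, and then $S$ is the preimage of a $Z$ containing $t$, whence $S \in \cu$. Closure under binary intersection and primeness are obtained from the $4$-valued map indexing the Boolean cells $R \cap S$, $R \setminus S$, $S \setminus R$, $X \setminus (R \cup S)$: the hypothesis $R, S \in \cu$ pins down $t$ to the cell $R \cap S$, so $R \cap S \in \cu$, while $R \cup S \in \cu$ forces $t$ into one of the three cells covering $R \cup S$, each of which immediately yields $R \in \cu$ or $S \in \cu$.

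There is no real obstacle here; the whole argument is a small exercise in tracking finite partitions. The only point that needs care is to choose the quotient $a$ fine enough that every subset appearing in the hypothesis or conclusion of the axiom being verified is saturated by the fibres of $a$, so that the biconditional (3.1) actually controls its membership in $\cu$.
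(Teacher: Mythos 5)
Your proposal is correct and follows essentially the same route as the paper: both directions are handled by unfolding the $D$-ultrafilter condition into the biconditional \eqref{eq3.1}, using the fact that an ultrafilter contains exactly one cell of each finite fibre partition, and, conversely, verifying the ultrafilter axioms via finite quotients — your $4$-cell Boolean partition is exactly the paper's map $a=\langle \chi_R,\chi_S\rangle:X\to\{0,1\}^2$, whose fibres are those cells. The only cosmetic difference is that you use a separate $3$-cell quotient for upward closure where the paper reuses the same map $\langle \chi_R,\chi_S\rangle$, which changes nothing of substance.
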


\begin{proof}
To give an element  of $X^{\aa}=\cp\cp X$ means precisely to give a collection $\cu$ of subsets of $X$. It is clear that \eqref{eq3.1} holds whenever $\cu$ is an ultrafilter: in the finite decomposition $X=\cup_{t\in a[X]} a^{-1}(t)$ we have a unique $t\in A$ with $ a^{-1}(t)\in \cu$, then \eqref{eq3.1}  follows. 

Conversely, suppose $\cu$ is a $\{0,1\}$-ultrafilter. From \eqref{eq3.1} we immediately see that  $\cu\not=\emptyset$ (it contains $a^{-1}(\{t\})$).  Given subsets $R,S\subseteq X$ expressed by their characteristic functions, we put
$$A=\{0,1\}^2 \text{ and } a=\langle \chi_R,\chi_S\rangle:X\to A.$$

(i) If $R\subseteq S$ and $R\in \cu$, then $S\in \cu$. We see that $R=a^{-1}(\{(1,1)\})$, thus in \eqref{eq3.1} we have $t=(1,1)$. Consequently, $S$ lies in $\cu$, since it is $a^{-1}(Z)$ for $Z=\{(1,1),\, (0,1)\}$.

(ii) If $R,S \in \cu$, then $R\cap S\in \cu$, since this is $a^{-1}(\{(1,1)\})$.

 (iii)  If $R\cup S\in \cu$, then $R\in \cu$ or $S\in \cu$. Indeed, assuming $R=a^{-1}(\{(1,0),\, (1,1)\})$ does not lie in $\cu$, then $t$ in \eqref{eq3.1} is $(0,1)$: it cannot be $(0,0)$ since $a^{-1}(\{(0,0)\})=\emptyset$. Consequently, $S=a^{-1}(\{(0,1),\, (1,1)\})$ lies in $\cu$. And $\emptyset \not\in \cu$ since we can choose $a:X\to 1$.
\end{proof}

{
\begin{example}
For $\ck=\Par$ and $D=\{1\}$ a $D$-ultrafilter on a set is precisely an ultrafilter on it. The proof is completely analogous to Lemma \ref{L3.3}.
\end{example}
}

\begin{example}\label{E3.4} Let $\ck=\Pos$ and $D=\2$. A $D$-ultrafilter on a poset $X$ is precisely a  nonempty prime upwards closed collection $\cu$ of   $\uparrow$-sets of $X$ which is closed under  finite intersections. Here \textit{ prime} means that $\emptyset\not\in \cu$  and whenever   $R\cup S\in \cu$, then $R\in \cu$ or $S\in \cu$  (for all $\uparrow$-sets $R$, $S$).

The proof is completely analogous to that of the above lemma. To give an element of $X^{\aa}$ means, by Example \ref{E2.2}(a), to give an  upwards closed collection of $\uparrow$-sets   $\cu$. If it is nonempty, prime, and closed under finite intersections, then for every morphism $a:X\to A$ with $A$ finite, the collection  $\hat{\cu}=\{Z\in A^{\ast}\mid a^{-1}(Z) \in \cu\}$ also has those properties, thus  $\displaystyle{\cap_{Z\in \hat{\cu}}Z=\uparrow t}\; \in \hat{\cu}$ for some $t\in A$. Then $Z\in \hat{\cu}$ iff $t\in Z$, ensuring that $\cu$ is a $D$-filter. The rest is the same as in \ref{L3.3}, just the set $A=\{0,1\}^2$ is substituted by the poset $D^2$: 
$$
\begin{array}{l}\xymatrix{&(1,1)\ar@{-}[dl]\ar@{-}[dr]&\\
(1,0)\ar@{-}[dr]&&(0,1)\ar@{-}[dl]\\
&(0,0)&}\end{array}$$
\end{example}

\begin{example}\label{E3.5} Let $\ck=\JSL$ with $D=\2$. A $D$-ultrafilter on a semilattice $X$ is precisely a prime,  upwards closed collection of  prime $\uparrow$-sets of $X$ (see \ref{E2.1}\ref{jsl}). Indeed, every  element $\cu$ of $X^{\aa}$ is a $D$-ultrafilter. To see this, given a morphism   $X\xrightarrow{a}A$, put $\hat{\cu}=\{Z\in A^{\ast} \mid a^{-1}(Z)\in \cu\}$. We want to prove that there is a unique $t_0$ in $A$ such that $Z\in \hat{\cu}\; \text{ iff }\, t_0\in Z$.  If $\hat{\cu}=\emptyset$, then $t_0=0$. If $\hat{\cu}\not=\emptyset$, every $Z\in \hat{\cu}$ is of the form $Z=\uparrow u_1\cup \dots \cup \uparrow u_k$ with $u_1,\, \dots, \, u_k$ incomparable elements of $A$. Since $\cu$ is prime, so is $\hat{\cu}$, therefore some  $\uparrow u_i$ belongs to $\hat{\cu}$. Thus, there are incomparable elements of $A$, $t_1, \, \dots, \, t_n$, such that  $\hat{\cu}$ consists of all sets $\uparrow t_i$, $i=1, \dots, n$, and all sets of $A^{\ast}$ containing some of them. It is easily seen that  $t_0=t_1 \vee \dots \vee t_n$ is as desired. 

 The rest  is analogous to $\Pos$, using that the above poset $D^2$ is a semilattice.
\end{example}


\begin{example}\label{E3.6} Let $\ck=K$-$\Vec$ and $D=K$. A $D$-ultrafilter on a vector space $X$ is a vector of the double-dual space $X^{\aa}$. Indeed, for every  finite-dimensional space $A$ the unit $\eta_A:A\to A^{\aa}$ is well-known to be invertible. Thus, the derived subobject is all $X^{\aa}$.

It turns out that there is a close analogy between ultrafilters on a set and vectors of the double-dual of a space. It is based on the following observation made in \cite{ABCPR}:

(i) To give an ultrafilter on a set $X$ means precisely to give a choice, for every finite decomposition $a:X\twoheadrightarrow n$ $(n\in \mathbb{N})$ of a class $a^{-1}(i), \, i\in n$, which is \textit{compatible}. That is, if $b:X\twoheadrightarrow m$ is a coarser decomposition (one factorizing through $a$) then the chosen class for $b$ contains $a^{-1}(i)$.

(ii) To give a vector of $X^{\aa}$ for a space $X$ means precisely to give a choice, for every finite-dimensional decomposition $a:X\twoheadrightarrow K^n$ ($n\in \mathbb{N}$) of a class $a^{-1}(i)$, $i\in K^n$, which is compatible.

 A different analogy between  $X^{\aa}$ and ultrafilters was presented in 
 Devlin's thesis \cite{D}.
\end{example}

\begin{example}\label{E3.9}  Let $M$ be a finite commutative monoid and $D=\cp M$. A $D$-ultrafilter on an $M$-set $(X,\cdot)$ is precisely an ultrafilter on $X$. Indeed, we know from Example \ref{E2.1}(e) that an element of $X^{\aa}$ is a collection $\cu$ of subsets of $X$. This is a $D$-ultrafilter iff for every homomorphism $a:X\to A$ with $A$ finite (= finitely presentable) there exists $t\in A$ such that \eqref{eq3.1} holds. Therefore, every $D$-ultrafilter is an ultrafilter. The proof of the converse is analogous to Lemma \ref{L3.3}. We just use, instead of the function $\chi_R$ there, the function $g_R: X\to \cp M$ of Example \ref{E2.1}(e). Thus, we work with $a=\langle g_R,\, g_S\rangle :X\to (\cp M)^2$.
\end{example}

\begin{example}\label{E:new3.10}Let $\ck=\Gra$ and $D$ the complete graph on $\{0,1\}$. A $D$-ultrafilter on a graph is an ultrafilter on its set of vertices. This follows from the next lemma.
\end{example}

\begin{lemma}\label{L3.10}For every graph $X=(V_X, E_X)$ the intersection 
of all derived subobjects is the graph whose vertices are all 
ultrafilters on $V_X$; and ultrafilters $\mathcal{F}, \, \mathcal{G}$
form an edge iff, given $F \in \mathcal{F}$ and $G \in \mathcal{G}$, 
there exists an edge of $X$ starting 
in $F$ and ending in $G$. In symbols:
\begin{equation}\label{(11)}
E_X\cap (F\times G)\not=\emptyset\quad \text{for all $F \in \mathcal{F}$ and $G \in \mathcal{G}$.}\end{equation}
\end{lemma} 

\begin{proof}
Recall that $X^{\aa}$ is the complete graph on $\cp\cp V_X$. 
Given $a:X\to A$  with $A$ finite, a collection 
$\mathcal{F}\in \cp\cp V_X$ lies in the derived subobject $a’:A’\to X^{\aa}$ 
iff it contains the set $a^{-1}(s)$ for a unique vertex $s\in V_A$.
Thus $\mathcal{F}$ lies in all derived subobjects iff it is an 
ultrafilter – this is proved as in Lemma \ref{L3.3} using complete 
finite graphs.

The edges of $A’$ are those pairs $\mathcal{F}$, $\mathcal{G}$
  of collections of subsets of $V_X$ which $p(a):A’\to A$ maps to
 an edge of $A$. That is, for which we have
\begin{equation}\label{(22)}
(s,t) \in E_A\quad \text{where $a^{-1}(s)\in \mathcal{F}$ and $a^{-1}(t)\in \mathcal{G}$.}
\end{equation}
Thus, it is our task to prove that given ultrafilters $\mathcal{F}$ 
and $\mathcal{G}$, then \eqref{(22)} holds for all morphisms of $X/\Gra_{\sfp}$  iff \eqref{(11)} holds.

\eqref{(11)} 	$\Rightarrow$ \eqref{(22)}. Put $F=a^{-1}(s)$ and $G=a^{-1}(t)$. 
By \eqref{(11)} there is an edge $(x,y)\in E_X$ with $a(x)=s$
 and $a(y)=t$. Since $a$ is a homomorphism, this implies $(s,t)\in E_A$.

\eqref{(22)} $\Rightarrow$ \eqref{(11)}. More precisely, from the fact that \eqref{(22)} holds for all $(A,a)$ we are to derive \eqref{(11)}.
We distinguish three cases:

(i) Assuming   $F-G\in \cf$, let $\sim$ be an equivalence relation on the set $V_X$ with finitely many classes, two of which are $F-G$ and $G$.  Denote by $X/\!\sim$ the quotient graph of $X$ and by $a:X\to X/\!\sim$ the quotient map:
the vertices of $X/\!\sim$ are the equivalence classes of $\sim$, and edges are all $([x],[y])$ for edges $(x,y)\in E_X$. From $F-G\in \cf$ we get $p(a)(\cf)=F-G$; analogously,  $p(a)(\cg)=G$. Thus  we have an edge $(x,y)\in E_X$ with $x\in F-G$ and $y\in G$.

(ii) Analogously for $G-F\in \cg$.

(iii)  Suppose $\overline{F-G}$ lies in $\cf$ and $\overline{G-F}$ lies in $\cg$. Since $F\cap G=F\cap \overline{F-G}$ we conclude that $F\cap G$ lies in both $\cf$ and $\cg$. Let $\sim$ be an equivalence relation on $V_X$ with classes $F\cap G$ and $\overline{F\cap G}$ (or just one class $F\cap G=X$ in case $F=G=X$). Then for the quotient map $a:X\to X/\!\sim$  we have $p(a)(\cf)=p(a)(\cg)=F\cap G$.     Thus, $F\cap G$  is a loop of $X/\!\sim$. Consequently, in $X$ there is an edge $(x,y)\in E_X$ with $x,y\in F\cap G$.
\end{proof}

\begin{example}\label{E:new3.12}
Analogously for $\ck=\Sigma$-$\Str$ and $D$ the complete $\Sigma$-structure on $\{0,1\}$. A  $D$-ultrafilter on a structure $X$ is precisely an  ultrafilter on its underlying set. The intersection of all derived subobjects has, for every $n$-ary symbol $\sigma \in \Sigma$, the relation of those $n$-tuples of ultrafilters $\mathcal{F}_1$, $\dots$, $\mathcal{F}_n$  which fulfil 
$$\sigma_X\cap (F_1\times \dots \times F_n)\not=\emptyset\quad \text{for all $\,F_1\in \mathcal{F}_1, \, \dots, \, F_n\in \mathcal{F}_n$.}$$
\end{example}

\section{The codensity monad of the embedding $\ck_{\sfp} \hookrightarrow \ck$}\label{S4}

In this section $\ck$ is a complete, symmetric monoidal closed category with a $\ast$-cogenerator $D$.

\begin{notation}\label{N4.1}
For every object   $X$ we denote by $i_X:TX\to X^{\aa}$ the wide intersection of all derived subobjects. (Thus the underlying set of $TX$ consists of all $D$-ultrafilters on $X$.) The factorizing morphisms  are denoted by $q(a)$ for all $a:X\to A$, $A\in \ck_{\sfp}$:
\begin{equation}\label{inters}
\xymatrix{&\mbox{\color{white}\underline{\color{black}A'}}\ar@{>->}[d]^{a'}\\
TX\; \ar@{>->}[ur]^{q(a)}\ar@{>->}[r]_{i_X}&X^{\aa}}\qquad \begin{array}{l}\\ \\ \text{for $(A,a)\in X/\ck_{\sfp}$}
\end{array}\end{equation}
\end{notation}

\begin{lemma}\label{L4.2} The morphisms  $i_X:TX\to X^{\aa}$ carry a subfunctor $T$ of $(-)^{\aa}$. 
\end{lemma}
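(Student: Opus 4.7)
The plan is to define $Tf : TX \to TY$ for $f: X \to Y$ by showing that $f^{\aa} \cdot i_X$ factors through $i_Y$, then check functoriality using the fact that each $i_Y$ is monic (being a wide intersection of monomorphisms $a'$, which are pullbacks of the monic $\eta_A$).

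First, I would show that the composite $f^{\aa} \cdot i_X : TX \to Y^{\aa}$ factors through every derived subobject $b' : B' \to Y^{\aa}$ of a morphism $b : Y \to B$ with $B \in \ck_{\sfp}$. The key observation is that $b \cdot f : X \to B$ is a morphism into a finitely presentable object, so by definition of $TX$ as a wide intersection of derived subobjects, $i_X$ factors through $(b\cdot f)'$, say via $q(b\cdot f) : TX \to (Bf)'$. Applying the pullback square \eqref{derived} that defines $(b\cdot f)'$, we obtain
\[
(b\cdot f)^{\aa}\cdot i_X \;=\; (b\cdot f)^{\aa}\cdot (b\cdot f)'\cdot q(b\cdot f) \;=\; \eta_B \cdot p(b\cdot f)\cdot q(b\cdot f).
\]
Since $(b\cdot f)^{\aa} = b^{\aa}\cdot f^{\aa}$, this says that $b^{\aa}\cdot (f^{\aa}\cdot i_X)$ factors through $\eta_B$, and hence by the pullback property of \eqref{derived} for $b$, the morphism $f^{\aa}\cdot i_X$ factors (uniquely) through $b' : B' \to Y^{\aa}$.

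Since this factorization holds for every $(B,b)\in Y/\ck_{\sfp}$, and $i_Y$ is the wide intersection of all such $b'$, the morphism $f^{\aa}\cdot i_X$ factors uniquely through $i_Y$; call this factorization $Tf : TX \to TY$, so that $i_Y\cdot Tf = f^{\aa}\cdot i_X$. Uniqueness of $Tf$ follows from $i_Y$ being monic.

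Functoriality is then routine. For the identity: $i_X\cdot T(\id_X) = \id_X^{\aa}\cdot i_X = i_X = i_X\cdot \id_{TX}$, so $T(\id_X) = \id_{TX}$ since $i_X$ is monic. For composition with $g : Y\to Z$:
\[
i_Z\cdot T(g\cdot f) \;=\; (g\cdot f)^{\aa}\cdot i_X \;=\; g^{\aa}\cdot f^{\aa}\cdot i_X \;=\; g^{\aa}\cdot i_Y\cdot Tf \;=\; i_Z\cdot Tg\cdot Tf,
\]
so $T(g\cdot f) = Tg\cdot Tf$ by monicity of $i_Z$. Finally, the defining equation $i_Y\cdot Tf = f^{\aa}\cdot i_X$ is precisely the naturality square exhibiting $i : T \Rightarrow (-)^{\aa}$, so $T$ is a subfunctor of $(-)^{\aa}$ as claimed.

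The argument is essentially forced by the definitions; the only thing to be careful about is keeping the pullback factorization $(b\cdot f)^{\aa}\cdot(b\cdot f)' = \eta_B\cdot p(b\cdot f)$ straight so as to invoke the pullback universal property for $b'$ at the right moment. No significant obstacle is anticipated.
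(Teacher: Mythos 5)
Your argument is correct and is essentially the paper's own proof: for each $b:Y\to B$ in $Y/\ck_{\sfp}$ one precomposes with $f$ to get $b\cdot f\in X/\ck_{\sfp}$ and uses the pullback property of the derived subobject of $b$ to factor $f^{\aa}\cdot i_X$ through $b'$, hence through the intersection $i_Y$. The only cosmetic difference is that the paper packages this as a morphism $u$ between the two derived subobjects and leaves functoriality implicit, while you verify it explicitly via monicity of the $i$'s — both are fine.
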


\begin{proof} The definition of $T$ on morphisms  $f:X\to Y$ follows automatically from the naturality of $i:T\to (-)^{\aa}$. Indeed, given a morphism  $f:X\to Y$, in order to verify that a (necessarily unique) morphism  $Tf$ exists making the following square
$$\xymatrix{TX\ar[r]^{i_X}\ar[d]_{Tf}&X^{\aa}\ar[d]^{f^{\aa}}\\
TY\ar[r]_{i_Y}&Y^{\aa}}$$
commutative, we just need to observe that
$f^{\aa}\ot i_X$ factorizes through all derived subojects of $Y^{\aa}$.
\noindent Indeed, for all $a:Y\to A$ with $A$ finitely presentable put 
$$\bar{a}=a\cdot f:X\to A.$$
Use the universal property of the pullback $a'$ (of $\eta_A$ along $a^{\aa}$) to define a morphism  $u$ as follows:
$$\xymatrix{TX\ar[d]_{q(\bar{a})}\ar[rd]^{i_X}&\\
\bar{A}'\ar[d]_u\ar[r]^{\bar{a}'}\ar@/_7ex/[dd]_{p(\bar{a})}&X^{\aa}\ar[d]^{f^{\aa}}\\
A'\ar[r]^{a'}\ar[d]_{p(a)}&Y^{\aa}\ar[d]^{a^{\aa}}\\
A\ar[r]_{\eta_A}&A^{\aa}}$$
Then $u\ot q(\bar{a})$ gives the desired factorization.
\end{proof}

\begin{definition}\label{R-D4.3} The functor $T$ of Lemma \ref{L4.2} carries  a monad $\mathbb{T}$ which is a submonad of $(-)^{\aa}$ via $(i_X)$. This is proved  in the next theorem. $\mathbb{T}$ is called the \textit{$D$-ultrafilter monad}. 
\end{definition}

\begin{examples}\label{E4.3}(a) For $\ck=\Set$ we see that $\mathbb{T}$ is the ultrafilter monad, for $\ck=K$-$\Vec$ it is the double-dualization monad. In both cases, $\mathbb{T}$ is well-known to be the codensity monad of $\ck_{\sfp}\hookrightarrow \ck$ (see Introduction). 

(b) In case $\ck=\Pos$ the monad $\mathbb{T}$ assigns to every poset $X$  the poset of all nonempty, prime, upwards closed  collections of  $\uparrow$-sets which are closed under  finite intersections. It is ordered by inclusion, see Examples \ref{E3.4} and \ref{E2.2}\ref{pos}.

(c) In $M$-$\Set$, $D$-ultrafilters on an object $(X,\cdot)$ are just ultrafilters on $X$. 
The monoid action assigns to every ultrafilter $\cu$ on $X$ and every element $m\in M$ the ultrafilter
$$m\cu=\{R\subseteq X;\, mR\in \cu\}$$
where
$$mR=\{x\in X;\, mx\in R\}.$$
See Examples \ref{E3.9} and \ref{E2.1}\ref{mset}.

(d) For $\ck=\Gra$, see Lemma \ref{L3.10}.  For  $\ck=\Sigma$-$\Str$, see Example \ref{E:new3.12}.

\end{examples}

\begin{theorem}\label{T4.4} Let $\ck$ be a complete, symmetric monoidal closed category with a $\ast$-cogenerator $D$. Then the $D$-ultrafilter monad is a submonad of $(-)^{\aa}$ which is the codensity monad  of the embedding $\ck_{\sfp}\hookrightarrow \ck$.
\end{theorem}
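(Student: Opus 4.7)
The plan is to use the limit formula for the codensity monad recalled in the introduction: writing $\widehat{T}X := \lim C_X$ with universal cone $\widehat{\psi}_a : \widehat{T}X \to A$ for $(A,a) \in X/\ck_{\sfp}$, I will exhibit a natural isomorphism $TX \cong \widehat{T}X$ under which the monad structure that $T$ inherits as a subfunctor of $(-)^{\aa}$ matches the codensity monad structure. The crucial consequence of $D$ being a $\ast$-cogenerator, obtained by dualizing $X^{\ast} = \mathrm{colim}\, C^{\ast}_X$, is that $X^{\aa}$ is the limit of the diagram $(A,a) \mapsto A^{\aa}$ over $X/\ck_{\sfp}$ with projections $a^{\aa}$; in particular the family $(a^{\aa})$ is collectively monic.

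First I construct the isomorphism. Set $\psi_a := p(a) \cdot q(a) : TX \to A$. From the pullback \eqref{derived} and the identity $i_X = a' \cdot q(a)$ of \eqref{inters} we read off the basic equation $\eta_A \cdot \psi_a = a^{\aa} \cdot i_X$. Combined with naturality of $\eta$ and monicity of $\eta_B$ (Proposition \ref{P2.8}), this makes $(\psi_a)$ compatible with connecting morphisms $u : (A,a) \to (B,b)$ in $X/\ck_{\sfp}$: $\eta_B \cdot u \cdot \psi_a = u^{\aa} \cdot a^{\aa} \cdot i_X = b^{\aa} \cdot i_X = \eta_B \cdot \psi_b$, and cancellation gives $u \cdot \psi_a = \psi_b$. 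This yields a unique $k : TX \to \widehat{T}X$ with $\widehat{\psi}_a \cdot k = \psi_a$. Conversely, the family $(\eta_A \cdot \widehat{\psi}_a)$ is a cone on the diagram $(A,a) \mapsto A^{\aa}$ by naturality of $\eta$, inducing $j : \widehat{T}X \to X^{\aa}$ with $a^{\aa} \cdot j = \eta_A \cdot \widehat{\psi}_a$. The pullback \eqref{derived} then lets $j$ factor through every derived subobject $a' : A' \rightarrowtail X^{\aa}$, and hence through the wide intersection $i_X$, giving $\bar{j} : \widehat{T}X \to TX$. Routine diagram chases using monicity of $\eta_A$ and collective monicity of $(a^{\aa})$ show $k$ and $\bar{j}$ are mutually inverse and natural in $X$; thus $T$ is the codensity functor.

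It remains to verify that the monad structure inherited from $(-)^{\aa}$ exists and agrees with the codensity one. For the unit, the identity $a^{\aa} \cdot \eta_X = \eta_A \cdot a$ exhibits $\eta_X$ as factoring through every $a'$, hence through $i_X$; call this factorization $\eta^T_X$. The codensity equation $\psi_a \cdot \eta^T_X = a$ follows from $\eta_A \cdot \psi_a \cdot \eta^T_X = a^{\aa} \cdot \eta_X = \eta_A \cdot a$ together with monicity of $\eta_A$. For the multiplication, the pullback \eqref{derived} at $a = \id_A$ degenerates to $A'=A$ with $a' = \eta_A$ and $p(a) = \id_A$, giving $\eta_A \cdot \psi_{\id_A} = i_A$. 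Applying $(-)^{\aa}$ to the pullback identity $a^{\aa} \cdot i_X = \eta_A \cdot \psi_a$ and using naturality of $\mu$ together with the monad triangle $\mu \cdot (\eta)^{\aa} = \id$, we compute
\[
a^{\aa} \cdot \mu_X \cdot (i_X)^{\aa} \cdot i_{TX} = \mu_A \cdot (a^{\aa} \cdot i_X)^{\aa} \cdot i_{TX} = \mu_A \cdot (\eta_A)^{\aa} \cdot (\psi_a)^{\aa} \cdot i_{TX} = (\psi_a)^{\aa} \cdot i_{TX} = i_A \cdot T\psi_a = \eta_A \cdot \psi_{\id_A} \cdot T\psi_a.
\]
This shows $\mu_X \cdot (i_X)^{\aa} \cdot i_{TX}$ factors through every $a'$, hence through $i_X$; the resulting $\mu^T_X : TTX \to TX$ exhibits $\TT$ as a submonad of $(-)^{\aa}$. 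The same computation yields $\psi_a \cdot \mu^T_X = \psi_{\id_A} \cdot T\psi_a$, and since the naturality identity $\psi_b = \psi_{\id_A} \cdot Tb$ (valid for any $b : Y \to A$ with $A \in \ck_{\sfp}$, as one checks by composing with $\eta_A$) gives $\psi_{\psi_a} = \psi_{\id_A} \cdot T\psi_a$, we conclude $\psi_a \cdot \mu^T_X = \psi_{\psi_a}$, which is the codensity multiplication equation.

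The main technical obstacle is this multiplication step: it simultaneously establishes the submonad property and the codensity compatibility, and it hinges on the interplay between the pullback identity $\eta_A \cdot \psi_a = a^{\aa} \cdot i_X$ and the triangle identity for $(-)^{\aa}$, which together collapse the iterated double-dualization into the desired projection.
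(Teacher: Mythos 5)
Your proposal is correct and follows essentially the same route as the paper: the $\ast$-cogenerator property gives the collectively monic limit cone $(a^{\aa})$ on $X^{\aa}$, the key identity $\eta_A\cdot\psi_a=a^{\aa}\cdot i_X$ together with monicity of $\eta_A$ drives all verifications, and $TX$ is identified with $\lim C_X$ exactly as in the paper's proof. The only (cosmetic) difference is the order of the monad-structure argument: you restrict the unit and multiplication of $(-)^{\aa}$ along $i$ and then match them against the codensity equations $\psi_a\cdot\eta^T_X=a$ and $\psi_a\cdot\mu^T_X=\psi_{\psi_a}$, whereas the paper equips $T$ with the codensity structure first and then checks that $i$ is a monad morphism.
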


\begin{proof}
Since the natural transformation $i:T\to (-)^{\aa}$ is monic, there is at most one monad structure making $i$ a monad morphism. We are going to prove that this structure exists, and that the resulting monad fulfils, for the embedding $E:\ck_{\sfp}\to \ck$, the limit formula for codensity monads (see Introduction).

\vskip1.5mm

(i) For every object   $X$ the cone 
$$a^{\aa}: X^{\aa}\to A^{\aa} \quad (\text{for all $(A,a)\in X/\ck_{\sfp}$})$$
is collectively monic. Indeed, since $D$ is a $\ast$-object,  we have $X^{\ast}=\text{co}\hspace*{-0.4mm}\lim C^{\ast}_X$, see Notation \ref{N2.4}. Now $(-)^{\ast}:\ck^{\op}\to \ck$ is a right adjoint, thus, it takes the colimit to a  limit cone $a^{\aa}:X^{\aa}\to A^{\aa}$ in $\ck$. 

(ii) Recall the notation $p(a)$ from Definition \ref{D:derived} and $q(a)$ from Notation \ref{N4.1}. We are going to prove that  for the embedding $E:\ck_{\sfp}\to \ck$ we have  
$$TX=\lim \DX$$
with the following limit cone
\begin{equation}\label{psi}\psi_a \equiv TX\xrightarrow{q(a)}A^{\pr}\xrightarrow{p(a)}A\qquad (a\in X/\ck_{\sfp}).
\end{equation}

First, $\psi_a$ is a cone of  $\DX$, i.e., given a morphism  $h$ 
\begin{equation}\label{eq-6}\xymatrix@=1.5em{&X\ar[dl]_a\ar[rd]^b&\\
A\ar[rr]_h&&B
}\end{equation}
 of $X/\ck_{\sfp}$, then $h\ot \psi_a=\psi_b$. Indeed, the following diagram
$$\xymatrix{&&TX\ar[lld]_{q(a)}^{\hspace*{7mm}\eqref{inters}}\ar[rrd]^{q(b)}_{\eqref{inters}\hspace*{5mm}}\ar[d]^{i_X}&&\\
A^{\prime}\ar[rr]_{a^{\prime}}\ar[dd]_{p(a)}^(0.35){\hspace*{3mm}\eqref{derived}}&&X^{\aa}\ar[ld]_{a^{\aa}}\ar[rd]^{b^{\aa}}&&B^{\prime}\ar[ll]^{b^{\prime}}\ar[dd]^{p(b)}_(0.35){\eqref{derived}\hspace*{3mm}}\\
&A^{\aa}\ar[rr]^{h^{\aa}}&&B^{\aa}&\\
A\ar[ru]^{\eta_A}\ar[rrrr]_h^*++{\labelstyle (\eta \text{ natural})}&&&&B\ar[lu]_{\eta_B}}$$
commutes.

Next suppose a cone of  $\DX$ with domain $Z$ is given: 
\begin{equation*}
\begin{array}{c}
X\xrightarrow{a}A \\
 \hline \\ [-4.5mm]
Z\xrightarrow{\tilde{a}}A
\end{array} \qquad \text{for $(A,a)\in X/\ck_{\sfp}$.}
\end{equation*} 
We prove that there is a unique morphism $k$   making the following triangles 
$$\xymatrix{Z\ar[ddrr]_{\tilde{a}}\ar[rrrr]^{k}&&&&TX\ar[dl]^{q(a)}\\
&&&A'\ar[dl]^{p(a)}&\\
&&A&&}$$
commutative. The diagram $\DX^{\aa}=(-)^{\aa}\ot \DX$ has the following cone:
\begin{equation*}
\begin{array}{c}
X\xrightarrow{a}A \\
 \hline \\ [-4.5mm]
Z\xrightarrow{\tilde{a}}A\xrightarrow{\eta_A}A^{\aa}
\end{array} 
\end{equation*} 
Indeed, this is compatible with $\DX^{\aa}$, because given  a morphism  \eqref{eq-6} of $X/\ck_{\sfp}$
we have the following commutative diagram
$$\xymatrix{&&Z\ar[dl]_{\tilde{a}}\ar[dr]^{\tilde{b}}&&\\
&A\ar[dl]_{\eta_A}\ar[rr]_h&&B\ar[dr]^{\eta_B}&\\
A^{\aa}\ar[rrrr]_{h^{\aa}}&&&&B^{\aa}}$$
Since by (i) $X^{\aa}$ is the limit of $\DX^{\aa}$, we obtain a unique morphism 
$$k_0:Z\to X^{\aa}$$ 
making the following squares
$$\xymatrix{Z\ar[r]^{k_0}\ar[d]_{\tilde{a}}&X^{\aa}\ar[d]^{a^{\aa}}\\
A\ar[r]_{\eta_A}&A^{\aa}}\qquad \quad \begin{array}{l}\\ \text{for all $X\xrightarrow{a}A$ in $X/\ck_{\sfp}$}\end{array}
$$
commutative. This implies that $k_0$ factorizes through the preimage $a'$ of $\eta_A$ under $a^{\aa}$. Hence, it factorizes through $i_X=\cap a'$:
$$\xymatrix{&TX\ar[d]^{i_X}\\
Z\ar[ru]^{k}\ar[r]_{k_0}&X^{\aa}}$$
This is the desired factorization, i.e., we have
$$\psi_a\ot k=\tilde{a}\qquad \text{ for all $X\xrightarrow{a}A$ in $X/\ck_{\sfp}$}.$$
Indeed in the following diagram
$$\xymatrix{Z\ar[rrrd]^{k_0}\ar[rd]^*!/u-5pt/{\labelstyle k}\ar[rddd]_{\psi_a\ot k}&&&\\
&TX\ar[rr]^{i_X}\ar[d]_{q(a)}&&X^{\aa}\ar[dd]^{a^{\aa}}\\
&A'\ar[rru]^{a'}\ar[d]^{p(a)}&&\\
&A\ar[rr]_{\eta_A}&&A^{\aa}}$$
all inner parts commute. Thus, by using the square above we get
\begin{equation}\label{eqq}\eta_A \ot(\psi_a\ot k)=a^{\aa}\ot k_0=\eta_A \ot \tilde{a}
\end{equation}
By Proposition \ref{P2.8},  $\eta_A$ is monic, so $k$ is the desired factorization.

Given a factorization $\hat{k}$, we prove $\hat{k}=k$. Let $\hat{k}_0=i_X\ot \hat{k}$, then we get
$a^{\aa}\ot \hat{k}_0=\eta_A\ot \tilde{a}$. Comparing this with \eqref{eqq} yields $a^{\aa}\ot \hat{k}_0=a^{\aa}\ot k_0$. From (i) we conclude $\hat{k}_0=k_0$. Since $i_X$ is monic, this proves $\hat{k}=k$.

(iii)
For every morphism  $h:X\to Y$ we need to verify that the definition of $Th$ (see Lemma \ref{L4.2}) agrees with the definition in the Introduction, i.e., the triangles 
$$\xymatrix{TX\ar[dr]_{\psi_{\ba}}\ar[rr]^{Th}&&TY\ar[dl]^{\psi_a}\\
&A&}\qquad \qquad a:Y\to A \text{ in } Y/\ck_{\sfp}$$
commute for $\ba=a\ot h$. For that consider the following diagram in which we denote, for $a:Y\to A$, by $\ba^{\pr}:\bar{A}^{\pr}\to X^{\aa}$ the derived subobject of $\ba$: 
$$\xymatrix{X^{\aa}\ar@/_7ex/[dddrrr]_{\ba^{\aa}=(a\ot h)^{\aa}}\ar[rrrrrr]^{h^{\aa}}&&&&&&Y^{\aa}\ar@/^7ex/[dddlll]^{a^{\aa}}\\
&&TX\ar[llu]_{i_X}\ar[d]_*!/u5pt/{\labelstyle q(\ba)}\ar[dr]^{\psi_{\ba}}\ar[rr]^{Th}&&TY\ar[ld]_{\psi_a}\ar[d]^*!/u5pt/{\labelstyle q(a)}\ar[rru]^{i_Y}&&\\
&&\bar{A}^{\pr}\ar[lluu]^{\ba^{\pr}}\ar[r]_{p(\ba)}&A\ar[d]^{\eta_A}&A^{\pr}\ar[l]^{p(a)}\ar[rruu]_ {a^{\pr}}&&\\
&&&A^{\aa}&&&}$$
Its inner parts, except the desired triangle, commute by \eqref{derived}, \ref{N4.1}, the definition of $\psi$ and naturality of $i$. The outward triangle also commutes. Thus, the desired triangle commutes since $\eta_A$ is monic by Proposition \ref{P2.8}.

 (iv) $T$ has the structure of a monad, namely, the codensity monad of the embedding $\ck_{\sfp}\hookrightarrow \ck$. It remains to verify that it is a  submonad of $(-)^{\aa}$, more precisely, that $i:T\to (-)^{\aa}$ is a monad morphism.  We denote by $\eta^T$ and $\mu^T$ the monad structure of $T$ and by $\eta$ and $\mu$ that of $(-)^{\aa}$.
 
 To prove that $i$ preserves the unit, consider the following diagram for every object   $X$ and all $(A,a)$ in $X/\ck_{\sfp}$:
$$\xymatrix{X\ar[ddd]_a\ar[rrd]^{\eta^T_X}\ar[rrrr]^{\eta_X}&&&&X^{\aa}\ar[ddd]^{a^{\aa}}\\
&&TX\ar[lldd]_{\psi_a}\ar[d]^{q(a)}\ar[rru]^{i_X}&&&\\
&&A^{\pr}\ar[lld]^{p(a)}\ar[rruu]_{a^{\pr}}&&\\
A\ar[rrrr]_{\eta_A}&&&&A^{\aa}}$$
The left-hand triangle is the definition of $\eta^T_X$, see Introduction. All the other inner parts except the upper triangle commute by Notation \ref{N4.1},  \eqref{derived} and  \eqref{psi}. Since the outward square commutes, this proves that the upper triangle, when prolonged by $a^{\aa}$, commutes. From (i) we conclude that the triangle commutes.

To prove that $i$ preserves multiplication, recall from Introduction that $\mu_X^T$ is defined by the following commutative triangles
\begin{equation}\label{mu}\xymatrix{TTX\ar[rrrr]^{\mu^T_X}\ar[dr]_{q(\psi_a)}&&&&TX\ar[dl]^{q(a)}\\
&A^{\prime\prime}\ar[dr]_{p(\psi_a)}&&A^{\pr}\ar[dl]^{p(a)}&\\
&&A&&}\qquad \begin{array}{l}\\ \\ \\ \\ \\(A,a)\in X/\ck_{\sfp}\end{array}
\end{equation}
 Consider the desired equality
$$i_X\ot \mu_X^T=\mu_X\ot i_X^{\aa}\ot i_{TX}.$$
which in view of Notation \ref{N4.1} means
$$a^{\pr}\ot q(a)\ot \mu_X^T=\mu_X\ot (a^{\pr})^{\aa}\ot q(a)^{\aa}\ot \psi_a^{\prime}\ot q(\psi_a)$$
where the derived subobject of $\psi_a:TX\to A$ is denoted by $\psi^{\pr}_a:A^{\prime\prime} \to (TX)^{\aa}$. This follows from the commutative diagram below:
$$\xymatrix{TTX\ar@/_4ex/[dd]_{i_{TX}}\ar[d]^{q(\psi_a)}\ar[rrrr]^{\mu^T_X}&&&&TX\ar[d]_{q(a)}\ar@/^7ex/[ddddd]^{i_X}\\
A^{\prime\prime}\ar[d]^{\psi_a^{\prime}}\ar[rrr]^{p(\psi_a)}&&&A\ar[d]^{\eta_A}&
A^{\pr}\ar[l]_{p(a)}\ar[dddd]^{a^{\pr}}\\
(TX)^{\aa}\ar@/_5ex/[ddd]_{(i_X)^{\aa}}\ar[d]^{q(a)^{\aa}}\ar[rrr]^{\psi_a^{\aa}}&&&A^{\aa}\ar[lldd]^{\eta_A^{\aa}}\ar@{=}[dd]&\\
(A^{\pr})^{\aa}\ar[dd]^{(a^{\pr})^{\aa}}\ar[rrru]_{(p(a))^{\aa}}&&&&\\
&A^{\aa\aa}\ar[rr]^{\mu_A}&&A^{\aa}&\\
X^{\aa\aa}\ar[ru]_{a^{\aa\aa}}\ar[rrrr]_{\mu_X}&&&&X^{\aa}\ar[lu]_{a^{\aa}}}$$
 All inner parts commute: for the upper one see \eqref{mu},  the lowest one is the naturality of $\mu$, and the triangle above  is the monad law $\mu\ot \eta^{\aa}=\id$. All the other parts commute by definition of $a^{\pr}$ and $\psi_a^{\prime}$. Consequently, the desired outward square commutes when postcomposed by $a^{\aa}$. Once again apply (i) to see that the proof is complete.
\end{proof}

\begin{remark}\label{R:new} (1) For the limit cone $(\psi_a)$ of the above proof we have commutative squares as follows:
$$\qquad \qquad \xymatrix{TX\ar[rr]^{\psi_a}\ar[dd]_{i_X}\ar@{-->}[rd]^{q(a)}&&A\ar[dd]^{\eta_A}\\
&A^{\pr}\ar@{-->}[ld]^{a^{\pr}}\ar@{-->}[ru]_{p(a)}&\\
X^{\aa}\ar[rr]_{a^{\aa}}&&A^{\aa}}\qquad \quad \begin{array}{c}\\ \\ \\ \\ \\ \text{for }a:X\to A \text{ in }X/\ck_{\sfp}\end{array}$$
Indeed the left-hand triangle is \eqref{inters} and for the right-hand part see \eqref{derived}.

(2)  For $\ck=\Set$ the function $\psi_a:TX\to A$ takes an ultrafilter $\cf$ to the unique element $x\in A$  with $a^{-1}(x)\in \cf$.

The analogous statement is true in all our examples where $TX$ is the set of all ultrafilters on the (underlying set of ) $X$:

\centerline{$\Par$,  $M$-$\Set$, $\Gra$ and $\Sigma$-$\Str$.}

In $\Pos$ and $\JSL$ the function $\psi_a$ takes a $D$-ultrafilter $\cf$ on $X$ to the largest element $x\in A$ with $a^{-1}(\uparrow x) \in \cf$ (see Examples \ref{E3.4} and \ref{E3.5}).
\end{remark}

\begin{observation}\label{O4.5} The components $\eta^T_A:A\to TA$ of the unit of the codensity monad are invertible for all finitely presentable objects  $A$. 
Indeed, recall from  the Introduction the formula $\psi_a\ot \eta_X^T=a$. The case  $a=\id_A:A\to A$, gives
$$\psi_{\id_A}\ot \eta_A^T=\id_A.$$
On the other hand, for every $b:A\to B$ in $A/\ck_{\sfp}$, we have
$$\psi_b\ot \eta_A^T\ot \psi_{\id_A}=b\ot \psi_{\id_A}=\psi_b.$$
The morphisms $\psi_b$ are the components of a limit, therefore they are collectively monic and we get
$$\eta_A^T\ot \psi_{\id_A}=\id_A.$$
\end{observation}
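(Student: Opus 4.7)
The plan is to construct the inverse of $\eta^T_A$ explicitly from the limit cone $(\psi_a)$ exhibited in \eqref{psi}. The natural candidate is $\psi_{\id_A}:TA\to A$, which is defined precisely because the hypothesis $A\in \ck_{\sfp}$ guarantees that $(A,\id_A)$ is an object of $A/\ck_{\sfp}$.

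First I would check $\psi_{\id_A}\ot \eta_A^T=\id_A$. This is immediate from the defining property of the unit of the codensity monad recalled in the Introduction, namely $\psi_a\ot \eta_X^T=a$, specialized to $X=A$ and $a=\id_A$.

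For the other direction, the goal is $\eta_A^T\ot \psi_{\id_A}=\id_{TA}$. Since $(\psi_b)_{(B,b)\in A/\ck_{\sfp}}$ is a limit cone it is collectively monic, so it suffices to verify $\psi_b\ot \eta_A^T\ot \psi_{\id_A}=\psi_b$ for every $b:A\to B$ in $A/\ck_{\sfp}$. Applying the unit formula once more we have $\psi_b\ot \eta_A^T=b$, reducing the desired equality to $b\ot \psi_{\id_A}=\psi_b$. But this last identity is the cone-compatibility condition for $(\psi_a)$ at the connecting morphism $b:(A,\id_A)\to (B,b)$ of $A/\ck_{\sfp}$, since the diagram $C_A$ sends this arrow to $b$ itself.

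I do not foresee any real obstacle: the whole argument is formal manipulation inside the limit structure, and the hypothesis $A\in \ck_{\sfp}$ enters only in order to make $\psi_{\id_A}$ a legitimate component of the limit cone.
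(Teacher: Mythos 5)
Your proposal is correct and follows exactly the paper's argument: the same two identities (the unit formula $\psi_a\ot\eta^T_X=a$ specialized at $\id_A$, and the cone compatibility $b\ot\psi_{\id_A}=\psi_b$) combined with the collective monicity of the limit cone $(\psi_b)$. Nothing is missing.
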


\begin{corollary}\label{C4.6}The codensity monad  of the embedding $\ck_{\sfp}\hookrightarrow \ck$ is the largest submonad of $(-)^{\aa}$ whose unit has invertible components at all finitely presentable objects.
\end{corollary}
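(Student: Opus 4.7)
The plan is to use Theorem \ref{T4.4} and Observation \ref{O4.5}, which already ensure that $\TT$ is a submonad of $(-)^{\aa}$ whose unit is invertible at every finitely presentable object. What remains is the maximality statement: given any submonad $\mathbb{S}=(S,\eta^S,\mu^S)$ of $(-)^{\aa}$, presented by a monic monad morphism $j:S\to (-)^{\aa}$ with $\eta^S_A$ invertible for every $A\in \ck_{\sfp}$, I want to produce a (necessarily unique) monad morphism $\sigma:S\to T$ satisfying $i\ot \sigma=j$.

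To define $\sigma_X:SX\to TX$ I would invoke the limit formula $TX=\lim \DX$ from Theorem \ref{T4.4}, specifying for each $(A,a)\in X/\ck_{\sfp}$ the component
$$\psi_a\ot \sigma_X=(\eta^S_A)^{-1}\ot Sa:SX\to A.$$
Compatibility of these components with the connecting morphisms $h:A\to B$ in $\DX$, i.e.\ $h\ot (\eta^S_A)^{-1}\ot Sa=(\eta^S_B)^{-1}\ot S(h\ot a)$, follows at once from naturality of $\eta^S$ (which gives $h\ot (\eta^S_A)^{-1}=(\eta^S_B)^{-1}\ot Sh$), so $\sigma_X$ is well defined by the universal property of the limit cone.

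The crucial verification is $i\ot \sigma=j$. Combining the commutative square $\eta_A\ot \psi_a=a^{\aa}\ot i_X$ from Remark \ref{R:new}(1) with the identity $j_A=\eta_A\ot (\eta^S_A)^{-1}$, which holds because $j$ preserves the unit and $\eta^S_A$ is invertible, I would compute
$$a^{\aa}\ot i_X\ot \sigma_X=\eta_A\ot \psi_a\ot \sigma_X=\eta_A\ot (\eta^S_A)^{-1}\ot Sa=j_A\ot Sa=a^{\aa}\ot j_X,$$
the last step by naturality of $j$. Since by Theorem \ref{T4.4}(i) the family $\{a^{\aa}\}_{a\in X/\ck_{\sfp}}$ is collectively monic, this forces $i_X\ot \sigma_X=j_X$.

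Naturality of $\sigma$ and the monad morphism laws $\sigma\ot \eta^S=\eta^T$ and $\sigma\ot \mu^S=\mu^T\ot \sigma\sigma$ then reduce, after postcomposition with the monic monad morphism $i$, to the already-known corresponding identities for $j$, $\eta$ and $\mu$; and since $j_X=i_X\ot \sigma_X$ is monic, so is $\sigma_X$, so $\mathbb{S}$ genuinely embeds into $\TT$ as a submonad. I expect the only delicate point is using the limit formula (rather than just the monicity of $i$) to define $\sigma_X$: the invertibility hypothesis on $\eta^S_A$ at finitely presentable $A$ is precisely what is needed, and is used, to supply the components $\psi_a\ot \sigma_X$.
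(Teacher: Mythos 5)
Your argument is correct, and it rests on the same key identity as the paper's proof, namely $j_A=\eta_A\ot(\eta^S_A)^{-1}$ (from unit preservation plus the invertibility hypothesis) combined with naturality of $j$, giving $a^{\aa}\ot j_X=\eta_A\ot(\eta^S_A)^{-1}\ot Sa$ for every $a:X\to A$ with $A\in\ck_{\sfp}$. Where you diverge is in which universal property of $TX$ you exploit. The paper reads this identity as saying that $j_X$ factors through the pullback $a'$ of $\eta_A$ along $a^{\aa}$ (the derived subobject), hence through the wide intersection $i_X$; the factorization $u_X$ with $j_X=i_X\ot u_X$ then exists with no further work, and the monad-morphism property of $u$ follows since $i$ and $j$ are monic monad morphisms. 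You instead use the limit formula $TX=\lim \DX$ established in Theorem \ref{T4.4}, turning $(\eta^S_A)^{-1}\ot Sa$ into a cone to induce $\sigma_X$, which obliges you to carry out two extra verifications the paper's route avoids: the cone-compatibility check (via naturality of $\eta^S$) and the identity $i_X\ot\sigma_X=j_X$, which you correctly deduce from Remark \ref{R:new}(1) together with the collective monicity of the $a^{\aa}$ from part (i) of the proof of Theorem \ref{T4.4} (this is where the $\ast$-cogenerator hypothesis enters your argument, whereas in the paper it is only needed to know that the intersection construction yields the codensity monad in the first place). Your reduction of naturality and of the monad-morphism laws for $\sigma$ to those for $j$ by postcomposing with the monic $i$ is sound, and matches the paper's closing remark. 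In short: both proofs are correct and closely parallel; the paper's pullback/intersection route is marginally shorter, while yours makes the role of the limit formula and of the invertibility of $\eta^S_A$ at finitely presentable objects more explicit.
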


\begin{proof}We show that every submonad
$$j:(\widehat{T}, \hat{\mu},\hat{\eta})\to ((-)^{\aa}, \mu, \eta)$$
with $\hat{\eta}_A$ invertible for all $A\in \ck_{\sfp}$ factorizes through $i$. Indeed, it is sufficient to verify that for every object   $X$ and all $a:X\to A$ in $X/\ck_{\sfp}$

\centerline{$j_X$ factorizes through $a^{\pr}$.}
This implies that $j_X$ factorizes through $i_X$, i.e., we have $u_X:\widehat{T}X\to TX$ with $j_X=i_X\ot u_X$. Since $i$ and $j$ are monic monad morphisms, it follows easily that $u:\widehat{T}\to T$ is also a monad morphism.

For every $a:X\to A$ in $X/\ck_{\sfp}$ we have $\eta_A=j_A\ot \hat{\eta}_A$, thus,
$$j_A=\eta_A\ot(\hat{\eta}_A)^{-1}.$$
Since $j$ is natural, we derive from $a^{\aa}\ot j_X=j_A\ot \widehat{T}a$ that 
$$a^{\aa}\ot j_X=\eta_A\ot \hat{\eta}_A^{-1}\ot \widehat{T}a.$$
This yields the desired factorization of $j_X$ through $a^{\pr}$:

$$\xymatrix{\widehat{T}X\ar[dd]_{\widehat{T}a}\ar@{..>}[rd]\ar[rrd]^{j_X}&&\\
&A^{\pr}\ar[r]^{a^{\pr}\quad}\ar[d]^{p(a)}&X^{\aa}\ar[d]^{a^{\aa}}\\
TA\ar[r]_{\hat{\eta}_A^{-1}}&A\ar[r]_{\eta_A}&A^{\aa}}$$
\end{proof}

\begin{examples}\label{newE-JSL}
(a) The codensity monad of the embedding of finite semilattices into $\JSL$ is the (full) double-dual monad. Indeed, for every finite semilattice $A$ the dual $A^{\ast}$ is isomorphic to $A^{\op}$: to every prime $\uparrow$-set $M\subseteq A$ (see Example \ref{E2.1}\ref{jsl}) assign its meet in $A$ to get a dual isomorphism $A^{\ast}\xrightarrow{\sim}A^{\op}$. Thus $A^{\aa}$ is isomorphic to $A$, and it is easy to see that $\eta_A:A\to A^{\aa}$ is indeed an isomorphism.

(b) Analogously for $K$-$\Vec$. We thus obtain another proof of Leinster's  result that $(-)^{\aa}$ is the codensity monad.
\end{examples}

\begin{remark}\label{R4.7} Corollary \ref{C4.6} gives a characterization that does not need  the technical concept of $\ast$-cogenerator or $D$-ultrafilter.

It is an open problem whether it holds for arbitrary finitely presentable  cogenerators in arbitrary symmetric monoidal closed categories  that are locally finitely presentable.
\end{remark}

\begin{noname}Summarizing all our examples, here is a survey of  codensity monads  $\mathbb{T}$ of embeddings $\ck_{\sfp}\hookrightarrow \ck$. In each case we describe the action of $T$ on an arbitrary object   $X$; in the table we just present the  elements of the underlying set of $TX$.  The structure of $TX$ as an object   of $\ck$ follows from Examples \ref{E4.3}, Lemma \ref{L3.10} and Example \ref{E:new3.12}.  For $K$-$\Vec$, $T$ is the usual double-dual functor. In the remaining examples, for each  morphism  $f:X\to Y$ the map $Tf$ is always given by assigning to a collection $\cu\in |TX|$ of subsets of $|X|$ the collection $\{R\subseteq |Y|;\, f^{-1}(R)\in \cu\}$.

\vskip5mm

\begin{center}
\begin{tabular}{|c|c|p{8cm}|}
\hline
Category&$D$& $D$-ultrafilters on an object\\
\hline
\hline 
$\Set$&$\{0,1\}$&ultrafilters\\
\hline 
$\Par$&\begin{tabular}{p{5mm}}$\{0\}$\end{tabular}&ultrafilters  \\
\hline
$\Pos$ &\begin{tabular}{p{5mm}}\\
{\setlength{\unitlength}{0.3mm}\begin{picture}(5,10)(0,0)\put(0,0){{\small 0}$\, \bullet$}\put(0,15){{\small 1}$\, \bullet$}\put(11,3){\line(0,1){15}}\end{picture}}\\ \end{tabular}&nonempty, prime $\uparrow$-sets,  closed  upwards and under finite intersections\\
\hline
$\JSL$&\begin{tabular}{p{5mm}}\\
{\setlength{\unitlength}{0.3mm}\begin{picture}(5,10)(0,0)\put(0,0){{\small 0}$\, \bullet$}\put(0,15){{\small 1}$\, \bullet$}\put(11,3){\line(0,1){15}}\end{picture}}\\ \end{tabular}&prime collections of prime $\uparrow$-sets, closed upwards \\
\hline
$\Gra$&  \begin{tabular}{p{18mm}} \vspace{2mm} $\xymatrix{0\ar@{-}@(ul,ur)\ar@{-}[r]&1\ar@{-}@(ul,ur)}$  \\  \end{tabular}
& ultrafilters on the set of vertices \vspace{4mm} \\
\hline
$\Sigma$-$\Str$&\begin{tabular}{c}$\{0,1\}$ \\
complete\end{tabular}& ultrafilters on the underlying set\\
\hline
$M$-$\Set$&$\cp M$&ultrafilters on the underlying set\\
\hline
$K$-$\Vec$&\begin{tabular}{p{5mm}}$K$\end{tabular}&vectors of the double-dual space\\
\hline

\end{tabular}
\end{center}
\end{noname}

\vspace*{2mm}
\section{Enriched codensity monads}

 Since we work with symmetric monoidal closed categories $\ck$, it is natural to ask weather the $D$-ultrafilter monad is actually the \textit{enriched codensity monad} of the embedding $E:\ck_{\sfp}\hookrightarrow \ck$. That is: is $T$ the enriched right Kan  extension of $E$ along itself? We prove that this is indeed the case for all of our examples. For commutative varieties $\ck$ we present a general proof, the cases $\Pos$, $\Gra$ and $\Sigma$-$\Str$ are   proved individually.

\begin{remark}\label{R-enriched}(1)  Since our category $\ck$ is symmetric monoidal closed, it is enriched over itself with hom-objects given by $[A,B]$ for $A,B\in \ck$. Let us shortly recall that to say that $T$ is the ($\ck$-)enriched  right Kan extension $\mathrm{Ran}_E E$ means that for every object $X$ of $\ck$ there is an isomorphism
$$\lambda_Z: [Z,TX]\cong [\ck_{\sfp}, \ck]\big([X,E-],[Z,E-]\big)$$
 natural in $Z$ ranging over $\ck^{\op}$. The object $[\ck_{\sfp}, \ck]\big([X,E-],[Z,E-]\big)$, that we abbreviate to $N(X,Z)$ below, is the hom-object of the two enriched hom-functors in the enriched-functor category $[\ck_{\sfp},\ck]$.

(2)  Since $\ck$ is complete, the object $N(X,Z)$ can be described by an equalizer in $\ck$, see \cite{Kelly} or  \cite[Proposition 6.3.1]{Borceux}.  We define below, for every pair $B$, $C$ in  $\ck_{\sfp}$, morphisms
\begin{equation*}\label{pair}\xymatrix{\displaystyle{\prod_{A\in \ck_{\sfp}}}[[X,A],[Z,A]]\ar@<1ex>[rr]^{u_{B,C} }\ar@<-1ex>[rr]_{v_{B,C} }&&\big[[B,C],\, [[X,B],[Z,C]]\big]}
\end{equation*}
such that $N(X,Z)$ is given by the following equalizer:
\begin{equation}\label{equal}\xymatrix{N(X,Z)\ar[r]^{e\qquad}&\displaystyle{\prod_{A\in \ck_{\sfp}}}[[X,A],[Z,A]]\ar@<1ex>[rr]^{\langle u_{B,C}\rangle \qquad  }\ar@<-1ex>[rr]_{\langle v_{B,C}\rangle \qquad }&&\displaystyle{\prod_{B,\, C\in \ck_{\sfp}}}\big[[B,C],\, [[X,B],[Z,C]]\big]}
\end{equation}
The above morphism $u_{B,C}$ is the transpose of the following morphism $\hat{u}_{B,C}$
\begin{equation}\label{def-u}
\xymatrix{
\big(\prod_{A\in \ck_{\sfp}} \big[[X,A],[Z,A]\big]\big)\otimes [B,C]      \ar[rrd]_{\hat{u}_{B,C}}    \ar[rr]^{ p_B\otimes [Z,E-]_{B,C}  }   
                                                                                                               & &  \big[[X,B],[Z,B]\big]\otimes \big[[Z,B],[Z,C]\big]\ar[d]^{c}\\                                                                                                                &&                            
 \big[[X,B],[Z,C]\big]\big]
}%
\end{equation}
where $p_B$ is the projection and $c$ is the composition map.
And the morphism $v_{B,C}$ is the transpose of  $\hat{v}_{B,C}$ below starting with the symmetry isomorphism $s$:
\begin{equation}\label{def-v}
\xymatrix{
\big(\prod_{A\in \ck_{\sfp}} \big[[X,A],[Z,A]\big]\big)\otimes [B,C]      \ar[rrdd]_{\hat{v}_{B,C}}       \ar[rr]^s     &&    [B,C]\otimes \big(\prod_{A\in \ck_{\sfp}} \big[[X,A],[Z,A]\big]\big)\ar[d]^{[X,E-]_{B,C}\otimes p_{C}}\\
                                                                                                               & &                          \big[[X,B],[X,C]\big]\otimes \big[[X,C],[Z,C]\big]\ar[d]^{c}\\
                                                                                                                 &&                             \big[[X,B],[Z,C]\big]
}%
\end{equation}

(3) We thus obtain a functor 
$$N(X,-):\ck^{\op}
\to \ck$$
where the action on morphisms $h:Z\to \overline{Z}$ of $\ck^{\op}$ is specified by  the equalizer \eqref{equal}: 
 let $\overline{e}$, $\overline{u}_{A,B}$ and $\overline{v}_{A,B}$ denote the above morphisms related to $\overline{Z}$ in place of $Z$. Then $N(X,h)$ is the unique morphism making the diagrams below commutative:
\begin{equation}\label{equal2}\xymatrix{N(X,Z)\ar[r]^{e\qquad}\ar[d]_{N(X,h)}&\prod_A[[X,A],[Z,A]]\ar@<1ex>[rr]^{\langle u_{B,C}\rangle \quad}\ar@<-1ex>[rr]_{\langle v_{B,C}\rangle \quad }\ar[d]_{\prod_A[[X,A],[h,A]]}&&\prod_{B,C}\big[[B,C],[[X,B],[Z,C]]\big]\ar[d]^{\prod_{B,C}[[B,C],[[X,B],[h,C]]}\\
N(X,\overline{Z})\ar[r]^{\overline{e}\qquad}&\prod_A[[X,A],[\overline{Z},A]]\ar@<1ex>[rr]^{\langle \overline{u}_{B,C}\rangle \quad}\ar@<-1ex>[rr]_{\langle \overline{v}_{B,C}\rangle \quad }&&\prod_{B,C}\big[[B,C],[[X,B],[\overline{Z},C]]\big]}
\end{equation}

(4) In each of our examples we are going to define an isomorphism $\lambda_Z: [Z,TX]\to N(X,Z)$ and prove that it is natural, in the ordinary sense, in $Z\in \ck^{\op}$. This natural  transformation is automatically enriched (see \cite[Proposition 6.2.8]{Borceux}).
\end{remark}


\begin{theorem}\label{T:var} The $D$-ultrafilter monad on a commutative variety is the enriched codensity monad of the embedding of finitely presentable algebras.
\end{theorem}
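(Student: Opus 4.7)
The plan is to construct an isomorphism $\lambda_Z:[Z,TX]\to N(X,Z)$ natural in $Z\in\ck^{\op}$ for every $X\in\ck$. Since $\ck$ is commutative, any such ordinary natural isomorphism between enriched hom-functors is automatically $\ck$-enriched, see \cite[Proposition~6.2.8]{Borceux}, so this suffices. By Theorem~\ref{T4.4}, $TX=\lim C_X$ in $\ck$ with limit cone $\psi_a:TX\to A$; for every morphism $f:Z\to TX$ this yields a cone $(\psi_a\circ f:Z\to A)_{(A,a)\in X/\ck_{\sfp}}$ on $C_X$, and conversely such cones correspond bijectively to morphisms $f$.

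For every $A\in\ck_{\sfp}$ I would define $\alpha_A^f:[X,A]\to[Z,A]$ on elements by $\alpha_A^f(a)=\psi_a\circ f$. The crucial point is that $\alpha_A^f$ is a homomorphism, hence a morphism of $\ck$: for any $n$-ary operation $\sigma$ and any $a^{(1)},\dots,a^{(n)}:X\to A$, one considers the tuple morphism $\langle a^{(i)}\rangle:X\to A^n$ (noting $A^n\in\ck_{\sfp}$) and the operation $\sigma_A:A^n\to A$, which is a homomorphism precisely because $\ck$ is commutative. Writing $b=\sigma_A\circ\langle a^{(i)}\rangle=\sigma(a^{(1)},\dots,a^{(n)})$ and using the cone property of $\psi$ at $\sigma_A$ and at the projections $\pi_i:A^n\to A$, one obtains
$$\alpha_A^f(b)\;=\;\psi_b\circ f\;=\;\sigma_A\circ\psi_{\langle a^{(i)}\rangle}\circ f\;=\;\sigma\bigl(\alpha_A^f(a^{(1)}),\dots,\alpha_A^f(a^{(n)})\bigr),$$
the last equality holding pointwise in $Z$. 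The family $(\alpha_A^f)$ is moreover natural in $A\in\ck_{\sfp}$: for $h:A\to B$ and $a:X\to A$, $\alpha_B^f(h\circ a)=\psi_{h\circ a}\circ f=h\circ\psi_a\circ f=h\circ\alpha_A^f(a)$.

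Next I would identify $N(X,Z)$ with the object of such natural families. Reading the equalizer \eqref{equal} of the morphisms \eqref{def-u} and \eqref{def-v} on underlying sets (applying $\ck(Y,-)$ which preserves equalizers, and using the adjunction $Y\otimes -\dashv[Y,-]$), the two equalizer conditions express exactly ordinary naturality at each $h\in[B,C]=\ck_{\sfp}(B,C)$; in a commutative variety, where every internal hom is the hom-set with pointwise operations, this coincides with enriched naturality. Hence $|N(X,Z)|$ is the set of families $(\alpha_A:[X,A]\to[Z,A])$ of $\ck$-morphisms natural in $A$, with $\ck$-structure computed pointwise. The assignment $f\mapsto(\alpha_A^f)$ is a bijection whose inverse sends a family $(\alpha_A)$ to the unique $f:Z\to TX$ determined by the cone $(\alpha_A(a):Z\to A)_a$. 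Operations match (they are pointwise in $Z$ on $[Z,TX]$ and pointwise in both $a$ and $Z$ on $N(X,Z)$), so $\lambda_Z$ is an isomorphism of $\ck$-objects; naturality in $Z$ is direct from the elementwise description.

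The main obstacle is the verification that each $\alpha_A^f$ is a homomorphism, which is exactly where commutativity of the variety is used. Without the hypothesis that every $\sigma_A:A^n\to A$ is a homomorphism, the family $(\alpha_A^f)$ would only be an ordinary natural transformation between the underlying hom-functors rather than a $\ck$-enriched one, and the enriched codensity monad could then be strictly smaller than~$T$.
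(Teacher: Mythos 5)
Your overall route coincides with the paper's: identify $N(X,Z)$, via the equalizer \eqref{equal}, with the algebra of ordinary natural transformations $[X,E-]\to[Z,E-]$ carrying pointwise operations, send $z\in[Z,TX]$ to the family $a\mapsto\psi_a\cdot z$, and check that this is a bijective homomorphism natural in $Z$ (enrichment of the isomorphism then being automatic). The one place where you argue differently is exactly the step you yourself call crucial, and there is a genuine gap in it: to show that $a\mapsto\psi_a\cdot f$ preserves an $n$-ary operation $\sigma$ you route through the tupling $\langle a^{(i)}\rangle:X\to A^n$ and invoke the compatibility of the limit cone $(\psi_a)$ at $\sigma_A:A^n\to A$ and at the projections, which is only available if $A^n$ lies in $\ck_{\sfp}$ — and your parenthetical ``noting $A^n\in\ck_{\sfp}$'' is unjustified. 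Finitely presentable algebras of a commutative variety need not be closed under finite powers: in the (entropic) variety of commutative semigroups the free algebra on one generator, $(\mathbb{N}_{>0},+)$, is finitely presentable, while its square is not even finitely generated (each element $(1,n)$ is indecomposable), and nothing in the standing hypotheses (a $\ast$-cogenerator $D$, hence $D$ finitely presentable) visibly excludes such behaviour. Nor can the step be rescued by writing $A^n$ as a filtered colimit of finitely presentable algebras, since $\langle a^{(i)}\rangle$ has arbitrary domain $X$ and need not factor through the colimit maps.

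The paper closes exactly this step by a different argument that avoids products altogether: since $\TT$ is a submonad of the enriched monad $(-)^{\aa}$ via $i_X:TX\to X^{\aa}$, one has $\eta_A\cdot\psi_a=a^{\aa}\cdot i_X$ (Remark~\ref{R:new}); because $a\mapsto a^{\aa}$ is a homomorphism and pre-/post-composition preserve the pointwise operations, this gives $\eta_A\cdot\psi_{\sigma(a_0,\dots,a_{n-1})}=\eta_A\cdot\sigma\bigl(\psi_{a_0},\dots,\psi_{a_{n-1}}\bigr)$, and the monomorphism $\eta_A$ (Proposition~\ref{P2.8}) can be cancelled. You should either substitute this argument for your product step, or add (and then verify, as indeed holds in all the paper's examples) the extra hypothesis that $\ck_{\sfp}$ is closed under finite powers. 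The remaining parts of your proposal — the concrete computation of the equalizer, bijectivity of $\lambda_Z$ via the universal property of the limit cone, compatibility of the pointwise operations, and naturality in $Z$ — agree with parts (1), (3) and (4) of the paper's proof and are fine.
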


\begin{proof}(1) A description of $N(X,Z)$. Recall that, for two algebras $B$ and $C$ in a commutative variety $\ck$, the hom-object $[B,C]$ is the set of all homomorphisms from $B$ to $C$  with operations defined pointwise.

The codomain of $e$ in \eqref{equal} is thus the algebra of all families of homomorphisms 
$$\tau=(\tau_A)_{A\in \ck_{\sfp}}, \quad \tau_A:[X,A]\to [Z,A]$$
with pointwise operations: given an $n$-ary operation $\sigma$ in the given signature and $n$ families $\tau^0, \dots, \tau^{n-1}$, then the family $\tau=\sigma(\tau^0, \dots, \tau^{n-1})$ is given by
\begin{equation}\label{ope}
\tau_A(a)=\sigma_{[Z,A]}\big(\tau_A^0(a), \dots, \tau_A^{n-1}(a)\big)\quad \text{for all $a:X\to A$,  $A\in\ck_{\sfp}$.}
\end{equation}
Given algebras $B,\, C\in \ck_{\sfp}$, then the morphism $\hat{u}_{B,C}$ of \eqref{def-u} takes a  collection $\tau=(\tau_A)$ and a homomorphism $f:B\to C$ to the homomorphism 
$$\hat{u}_{B,C}(\tau,f):[X,B]\to [Z,C]$$
given by $(X\xrightarrow{b}B) \mapsto (Z\xrightarrow{\tau_B(b)}B\xrightarrow{f}C)$. Consequently, the transpose $u_{B,C}$ takes a family $\tau$  to the  homomorphism
$u_{B,C}(\tau): [B,C]\to [[X,B],[Z,C]]$ given by 
$${u_{B,C}}(\tau)\, :\, (B\xrightarrow{f}C)\xymatrix{\ar@{|->}[r]&} \big([X,B]\xrightarrow{\tau_B}[Z,B]\xrightarrow{[Z,f]}[Z,C]\big).$$
Whereas $v_{B,C}$ takes $\tau$ to the homomorphism $v_{B,C}(\tau)$ given by 
$${v_{B,C}}(\tau)\, :\, (B\xrightarrow{f}C)\xymatrix{\ar@{|->}[r]&} \big([X,B]\xrightarrow{[X,f]}[X,C]\xrightarrow{\tau_C}[Z,C]\big).$$
The equalizer $e:N(X,Z)\hookrightarrow \prod_{A\in \ck_{\sfp}} [[X,A], [Z,A]]$ is  thus  the subalgebra of the product given by all families $\tau$ for which the following squares
$$\qquad\qquad\qquad \qquad \quad \begin{array}{c}\xymatrix{[X,B]\ar[d]_{[X,f]}\ar[rr]^{\tau_B}&&[Z,B]\ar[d]^{[Z,f]}\\
[X,C]\ar[rr]_{\tau_C}&&[Z,C]}\end{array}\qquad ( f:B\to C \text{ in } \ck_{\sfp})$$
commute. In other words: 
$N(X,Z)$ consists of all ordinary natural transformations
$\tau:[X,-] \to [Z, -]$.
The operations of the algebra $N(X,Z)$ are given by \eqref{ope}.

(2) The definition of the morphism $\lambda_Z: [Z,TX]\to N(X,Z)$. Recall from the proof of Theorem \ref{T4.4} that $TX$ is the limit of the diagram
$$C_X:X/\ck_{\sfp}\to \ck, \qquad (X\xrightarrow{a}A)\mapsto A$$
with the limit cone $\psi_a:TX\to A$. We first verify that  for every finitely presentable algebra $A$  the map $a\mapsto \psi_a$ is a homomorhism from $[X,A]$ to $[TX,A]$. Thus suppose that $\sigma$ is an $n$-ary operation in our signature and we are given
$$a=\sigma_{[X,A]}(a_0,\dots, a_{n-1}).$$
Then we prove that  $$\psi_a=\sigma_{[TX,A]}(\psi_{a_0},\dots, \psi_{a_{n-1}}).$$
We use the fact that $\mathbb{T}$ is a submonad of the double-dualization monad via $i_X:TX\to X^{\aa}$, see Theorem \ref{T4.4}. Since $(-)^{\aa}$ is an enriched functor, we have 
$$a^{\aa}=\sigma_{[X,A]^{\aa}}(a^{\aa}_0,\dots,a^{\aa}_{n-1}).$$ 
From the fact that operations are defined pointwise we derive
$$a^{\aa}\ot i_X=\sigma_{[TX, A^{\aa}]}(a_0^{\aa} \cdot i_X, \dots, a_{n-1}^{\aa} \cdot i_X).$$
Consequently, for the unit $\eta_A:A\to TA$ we get
$$\begin{array}{rll}
\eta_A\ot \psi_a         &            =a^{\aa}\ot i_X                                                                   &\text{by Remark \ref{R:new}}    \\
                             &= \sigma_{[TX,A^{\aa}]}(a^{\aa}_0\ot i_X, \dots, a^{\aa}_{n-1}\ot i_X) &                                              \\
                             &=\sigma_{[TX, A^{\aa}]}(\eta_A\ot\psi_{a_0},\dots, \eta_A\ot \psi_{a_{n-1}})&    \text{by Remark \ref{R:new}}                                            \\
                                 &=  \eta_A\ot  \sigma_{[X,A]}(\psi_{a_0},\dots, \psi_{a_{n-1}})            & \text{since $\eta_A$ is a homomorphism.}\\
\end{array}$$
We know from Proposition \ref{P2.8} that $\eta_A$ is monic, which establishes the desired equality.

Consequently, every homomorphism $z:Z\to TX$ yields a natural transformation
$$\tau^z:[X,E-]\to [Z,E-]$$
whose component $\tau^z_A$ for $A\in \ck_{\sfp}$ is defined by
$$\tau^z_A: (X\xrightarrow{a}A) \mapsto (Z\xrightarrow{z}TX\xrightarrow{\psi_a}A).$$
Indeed, each $\tau^z_A$ is a homomorphism, since given $a=\sigma_{[X,A]}(a_0,\dots,a_{n-1})$ as above, we have
$$
\psi_a\cdot z=\sigma_{[TX,A]}(\psi_{a_0},\dots, \psi_{a_{n-1}})\cdot z=\sigma_{[Z,A]}(\psi_{a_0}\cdot z,\dots, \psi_{a_{n-1}}\cdot z)$$
(again using the pointwise definition of $\sigma$). And the naturality of $\tau^z$ follows from the fact that $(\psi_a)$ is a cone of the diagram $C_X:X/\ck_{\sfp}\to \ck$:
for every morphism of $X/\ck_{\sfp}$
%
$$\xymatrix{&X\ar[ld]_a\ar[rd]^{a'}&\\A\ar[rr]_u&&A'}$$
we have $\psi_{a'}=u\ot \psi_a$, hence, $\psi_{a'}\ot z=u\ot(\psi_a\ot z)$. 

We define $\lambda_Z:[Z,TX]\to N(X,Z)$ by 
\vskip1mm
\centerline{$\lambda_Z(z)=\tau^z$ for all $z:Z\to TX$.}
\vskip1mm
(3)  $\lambda_Z$ is an isomorphism. First, the underlying function $z\mapsto \tau^z$ is bijective. This follows from the universal property of the limit cone  $(\psi_a)$, since  to give a cone of $C_X$ with domain $Z$ is nothing else then to give a natural transformation $\tau$  from $\ck(X,E-)$ to $\ck(Z, E-)$.

It remains to verify that $\lambda_Z$ is a homomorphism. Given elements
 $z_0$, ..., $z_{n-1}$ of $[Z,TX]$, let $z=\sigma_{[Z,TX]}(z_0,\dots,z_{n-1})$. We verify that 
$$\tau^z=\sigma_{N(X,Z)}\big(\tau^{z_0},\dots, \tau^{z_{n-1}}\big)$$
where the right-hand side is the above operation  \eqref{ope} on natural transformations. That is, the $A$-component of the right-hand side is given, in each $a$ of $[X,A]$, by
$$\sigma_{[Z,A]}\big(\psi_a\ot z_0,\dots,\psi_a\ot z_{n-1})=\psi_a\ot \sigma_{[Z,TX]}(z_0,\dots,z_{n-1}).$$
And this is precisely the $a$-component of $\tau^z_A$.

(4) The naturality of $\lambda$. Given a homomorphism $h:\overline{Z}\to Z$, the homomorphism $N(X,h)$ of \eqref{equal2}   takes a natural transformation $\tau:[X,E-]\to [Z,E-]$ to the natural transformation $\widehat{\tau}$ with components
$$\xymatrix{\widehat{\tau}_A: [X,A]\xrightarrow{\tau_A}[Z,A]\xrightarrow{[h,A]}[\overline{Z},A]}$$
Thus, $N(X,h)\ot \lambda_Z$ assigns to each $z:Z\to TX$ the natural transformation $\widehat{\tau^z}$ with components
$$\widehat{\tau^z}_A(a)=[h,A](\psi_a\ot z)=\psi_a\ot z\ot h.$$
And $\lambda_{\ov{Z}}\ot[h,TX]$ assigns to it   $\tau^{z\ot h}$ with the same components.
\end{proof}


\begin{example}\label{E5:Pos} The $D$-ultrafilter monad on $\ck=\Pos$ is the enriched codensity monad of the embedding of finite posets. The proof is analogous to the preceding one:

(1) A description of the poset $N(X,Z)$. Recall that for posets $B$, $C$ we have the poset $[B,C]$ of all monotone functions from $B$ to $C$ ordered componentwise. Arguing precisely as in the above proof we conclude that  
$N(X,Z)$ consists of all ordinary  natural transformations $\tau:[X,E-]\to [Z,E-]$ ordered componentwise:
$$\tau\leq \tau'\text{ iff } \tau_A(a)\leq \tau^{\pr}_A(a) \text{ for all } A\in \Pos_{\sfp} \text{ and } (A,a) \in X/\Pos_{\sfp}.$$

(2) The definition of the morphism $\lambda_Z:[Z,TX]\to N(X,Z)$. We first verify that for every finite poset $A$ we have a monotone map
$$[X,A]\to[TX,A], \quad a\mapsto \psi_a.$$
For that recall the limit cone $\psi_a:TX\to A$: it takes a $D$-ultrafilter $\cf$ on $X$ to the largest element $t\in A$ with $a^{-1}(\uparrow t)\in \cf$ (see Remark \ref{R:new}). Given $a\leq b$ in $[X,A]$, then for the above $t$ we have
$$a^{-1}(\uparrow t)\subseteq b^{-1}(\uparrow t), \text{ thus, } b^{-1}(\uparrow t)\in \cf .$$
This implies for $s=\psi_b(\cf)$ that $t\leq s$. Shortly, $\psi_a(\cf)\leq \psi_b(\cf)$.

The rest is analogous to the proof above, part (2): we define $\lambda_Z(z)=\tau^z$ with components $\tau^z_A(a)=\psi_a\ot z$, and we have that each $\tau^z$ is a natural transformation. 

(3) $\gl_Z$ is an  isomorphism. Since the underlying function is bijective, we only need to prove for all $A$ finite and all monotone $z,\, \ov{z}:Z\to TX$ that 
$$z\leq \ov{z} \quad \text{iff}\quad \tau^z\leq \tau^{\ov{z}}.$$
Indeed, for $z\leq \ov{z}$ we derive $\tau^{z}(a)=\psi_a\ot z\leq \psi_a\ot \ov{z}= \tau^{\ov{z}}(a)$, i.e., $ \tau^z\leq \tau^{\ov{z}}$. Conversely, if $ \tau^z\leq \tau^{\ov{z}}$, then for all $a$ we have $\psi_a \ot z\leq \psi_a \ot \ov{z}$. Since limits in $\Pos$ are conical this implies $z\leq \ov{z}$.

(4) The proof of naturality of $\gl$ is completely analogous to the above proof. 
\end{example}

\begin{example}\label{exa4.14} The $D$-ultrafilter monad on $\Gra$ is the enriched codensity monad of the embedding of finite graphs.

(1) The description of the graph $N(X,Z)$. 
Recall that the hom-object $[B,C]$ for graphs $B$ and $C$ has as vertices all functions  $r:VB\to VC$, where $V:\Gra\to \Set$ is the usual forgetful functor. And edges are pairs $(r,r')$ of functions such that
$$(x,x')\in E_B\quad \text{implies} \quad (r(x),r'(x'))\in E_C.$$
The codomain of the equalizer \eqref{equal} defining $N(X,Z)$ is the graph $\prod_{A\in \Gra_{\sfp}}[[X,A],[Z,A]]$ of all collections $\tau=(\tau_A)$ of functions
$$\tau_A:\Set(VX,VA)\to \Set(VZ,VA).$$
Two such collections $(\tau,\tau')$ form an edge of the product iff every projection to $[[X,A], [Z,A]]$ yields an edge $(\tau_A, \tau^{\pr}_A)$, i.e., iff the following implication holds:
\begin{equation}\label{edge}
(r,r')\in E_{[X,A]} \quad \text{implies}\quad (\tau_A(r), \tau^{\pr}_A(r'))\in E_{[Z,A]} 
\end{equation}
for every finite graph $A$.

The morphism  $u_{B,C}$  takes a family $\tau$ to the function
$$u_{B,C}(\tau):[B,C]\to [[X,B],[Z,C]]$$
defined by
$$(VB\xrightarrow{r}VC)\mapsto [Z,r]\ot \tau_B=\Set(VZ,r)\ot \tau_B$$
whereas  $v_{B,C}$ takes $\tau$ to the function $v_{B,C}(\tau)$ defined by
$$(VB\xrightarrow{r}VC)\mapsto \tau_C\ot [X,r]=\tau_C\ot \Set(VX,r).$$
The equalizer $N(X,Z)$ is then given by all families $\tau$  such that   for every function $r:VB\to VC$ with $B$, $C$ finite the square below
$$\xymatrix{\Set(VX,VB)\ar[d]_{\Set(VX,r)}\ar[r]^{\tau_B}&\Set(VZ,VB)\ar[d]^{\Set(VZ,r)}\\
\Set(VX,VC)\ar[r]_{\tau_C}&\Set(VZ,VC)}$$
commutes. In other words,
$N(X,Z)$ consists of all  natural transformations 
$$\tau: \Set(VX,VE-)\to \Set(VZ,VE-).$$
The edges of $N(X,Z)$ are pairs $\tau$, $\tau'$ of  natural transformations  satisfying \eqref{edge}.

(2) The definition of the morphism  $\gl_Z:[Z,TX]\to N(X,Z)$.  
Let $C_X:X/\Gra_{\sfp}\to \Gra$ and $C_{VX}: VX/\Set_{\sfp}\to \Set$ be the canonical diagrams. Thus $C_{VX}$ takes a map $r:VX\to M$, $M$ a finite set, to $M$. We denote its  limit cone by
\begin{equation*}\widetilde{T}VX\xrightarrow{\widetilde{\psi}_r} M\, , \; \text{ for all }r:VX\to M, \, M \text{ finite. }\end{equation*}
 The following triangle
$$\xymatrix{X/\Gra_{\sfp}
\ar[r]^{C_X}\ar[d]_H&
\Gra\ar[r]^V&\Set\\
VX/\Set_{\sfp}\ar[rru]_{C_{VX}}}$$
where $H$ takes every $X\xrightarrow{a}A$ to $VX\xrightarrow{Va}VA$, is commutative. Since $H$ is a final functor and $V$ preserves limits, we obtain
$$\wt{T}X=\lim C_{VX}=\lim VC_X =V\lim C_X=VTX$$
with the limit cone $(\wt{\psi}_r)_{r\in VX/\Set_{\sfp}}$ satisfying $V\psi_a=\wt{\psi}_{Va}$ for every $a\in X/\Gra_{\sfp}$.

Cones $\gamma$ from $VZ$ to $C_{VX}$ are in bijective correspondence with cones from $VZ$ to $VC_X$ via the assignment 
$(\gamma_r)_{r\in VX/\Set_{\sfp}} \, \mapsto \, (\gamma_{Va})_{a\in X/\Gra_{\sfp}}.$
And the latter  are in bijective correspondence  with the natural transformations from $\Set(VX,VE-)$ to $\Set(VZ,VE-)$. For every function $z\in [Z,TX]$ we have 
the  natural transformation  $\tau^z:\Set(VX,VE-)\to \Set(VZ,VE-)$ with components
$$\tau^z:(VX\xrightarrow{r}VA)\mapsto (VZ\xrightarrow{r}VTX\xrightarrow{\wt{\psi}_{r}}VA).$$
It follows that we can define the function $\lambda_Z$ as before by $\lambda_Z(z)=\tau^z$, 
and, moreover, that it is a bijection.

(3) We next verify that the map $r\mapsto \wt{\psi}_{r}$ is a graph morphism  from $[X,A]$ to $[TX,A]$ for every finite graph $A$. That is, we verify that
$$(r,r')\in E_{[X,A]} \quad \text{implies} \quad (\wt{\psi}_r, \wt{\psi}_{r'})\in E_{[TX,A]}.$$
Thus we must prove that
$$(r,r')\in E_{[X,A]} \; \text{and}\; (\cf,\cf')\in E_{TX}\quad \text{imply} \quad (\wt{\psi}_r(\cf), \wt{\psi}_{r'}(\cf'))\in E_{A}.$$
Put $u=\wt{\psi}_r(\cf)$, this is the unique vertex of $A$ with $r^{-1}(u)\in \cf$, see Remark \ref{R:new}. Analogously $u'=\wt{\psi}_{r'}(\cf')$ yields $(r')^{-1}(u')\in \cf'$. Recall the description of edges of $E_{TX}$ in Lemma \ref{L3.10}.
(4) $\gl_Z$ is an isomorphism. We already saw in (2) that it is a bijection. 
It remains to prove that $\gl_Z$ and its inverse preserve edges.

(4a) $\gl_Z$ preserves edges. Let $(z,z')\in E_{[Z,TX]}$, we are to prove that the pair $(\tau^z,\tau^{z'})$ satisfies \eqref{edge} above:
$$\text{if}\;\; (r,r')\in E_{[X,A]} \quad \text{then} \quad  (\wt{\psi}_r \ot z, \wt{\psi}_{r'}\ot z')\in E_{[Z,A]}.$$
That is, for every edge $(u,u')\in E_Z$ we are to verify 
$$(\wt{\psi}_r \ot z(u), \wt{\psi}_{r'}\ot z'(u'))\in E_{A}.$$
The ultrafilters $\cf=z(u)$ and  $\cf'=z'(u')$  form an edge of $TX$ due to $(z,z')\in E_{[Z,TX]}$. From (3) above we get that $(\wt{\psi}_r(\cf), \wt{\psi}_{r'}(\cf'))$ is an edge, as desired.

(4b) $\gl_Z^{-1}$ preserves edges. In other words, for arbitrary $z,z':VZ\to VTX$ we are to prove that 
\vskip1mm
\centerline{$(\tau^z,\tau^{z'})\in E_{TX}$ implies $(z,z')\in E_{[Z,TX]}$.}

\vskip1mm

\noindent  That is, we should  prove
$$(z(u),z'(u'))\in E_{TX}\quad \text{for all $(u,u')\in E_Z$.}$$
By \eqref{edge}, $$\text{$(\tau_A^z(r),\tau_A^{z'}(r'))\in E_{[Z,A]}$ for all $(r,r')\in E_{[X,A]}$.}$$
 For every $(X\xrightarrow{a}A)$ in $X/\Gra_{\sfp}$, we know from Example \ref{E2.2}(b) that $Va$ is a loop of $[X,A]$, hence 
the pair $\big(\tau_A^z(Va),\tau_A^{z'}(Va)\big)$ is an edge of  ${[Z,A]}$. Therefore, if $(u,u')\in E_Z$, we have that $(\wt{\psi}_{Va}\ot z(u),\wt{\psi}_{Va}\ot{z'}(u'))=(V\psi_a(z(u)),V\psi_a({z'}(u')))$ belongs to  $E_A$ for all $a\in X/\Gra_{\sfp}$; consequently, $(z(u),z'(u'))\in E_{TX}$, see Lemma \ref{L3.10}.


(4)  The naturality of $\gl$ is shown analogously to the  proof of \ref{E5:Pos}.
\end{example}

\begin{example} For the category  $\Sigma$-$\Str$ the $D$-ultrafilter monad  is also the enriched codensity monad of the embedding of finite structures. The details are   completely analogous to the case of graphs above.
\end{example}

\vspace*{10mm}

\section{Further Examples}\label{S5}

In this section we consider a more general setting: a complete category  $\ck$ and a small, full subcategory $\ca$. We discuss the codensity monad of the embedding $\ca\hookrightarrow  \ck$.

Given a set $\{D_i\}_{i\in I}$ of cogenerators of $\ck$ lying in $\ca$ we obtain a monad $\mathbb{S}$ on $\ck$ from the well-known adjunction $L\dashv R: \big(\Set^{I}\big)^{\op} \to \ck$ where
$$LX=\big(\ck(X, D_i)\big)_{i\in I} \; \text{ and } \; R(M_i)_{i\in I}=\prod_{i\in I}  D_i^{M_i}.$$
We are going to characterize the codensity monad of $\ca \hookrightarrow \ck$ as the smallest submonad of $\mathbb{S}$ with a property called the \textit{limit property} below.
 We continue using the notation of Introduction:
$$C_X:X/\ca\to \ck, \; (X\xrightarrow{a}A) \mapsto A.$$
 
 \begin{remark}\label{R5.1} The above monad $\mathbb{S}$ is given on objects $X$ by  $SX=\prod_{i\in I}D_i ^{\ck(X,D_i)}$ with the unit $\eta^S:\Id\to S$ defined by the projections $\pi_f$ for $ f:X\to D_i$, as follows
 $$\pi_f\ot \eta_X^S=f.$$
 Thus $\eta_X^S$ is monic, since $(D_i)$ is a cogenerating set.
 
 The multiplication $\mu^S$ is determined by the commutativity of the following  triangles
 $$\xymatrix{SSX\ar[rr]^{\mu^S_X}\ar[dr]_{\pi_{\pi_a}}&&SX\ar[dl]^{\pi_a}\\
 &D_i&}$$
 for all $a:X\to D_i$ and $i\in I$.
 \end{remark}
 
 \begin{definition}\label{Dlimit}
 A monad $\mathbb{T}$ on $\ck$ has the \textit{ limit property} (with respect to the embedding $\ca\hookrightarrow \ck$) if for every object $X$ we have $TX=\lim TC_X$ with the canonical limit cone of all $Ta$ for $a\in X/\ca$.
 \end{definition}
 
 \begin{example} (1) The codensity monad of $\ca \hookrightarrow \ck$ has the limit property: use the limit formula. 
 
 (2) In a symmetric monoidal closed complete category $\ck$, for every $\ast$-object $D$, the double-dualization monad $(-)^{\aa}=[[-,D],D]$ has the limit property, since $[-,D]:\ck^{\op}\to \ck$ is a right adjoint.
 \end{example}
 
 \begin{lemma}\label{Llim} The monad $\mathbb{S}$ has the limit property.
\end{lemma}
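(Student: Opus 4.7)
The plan is to verify directly that the candidate cone $(Sa:SX\to SA)_{a\in X/\ca}$ is a limit cone for the diagram $SC_X$. The structural facts I would exploit are that $S=RL$ with $R$ a right adjoint (giving good control over the product projections $\pi_f$), together with the standing hypothesis that each cogenerator $D_i$ lies in $\ca$, so that every arrow $f:X\to D_i$ is itself an object of the indexing category $X/\ca$.

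First I would record two elementary formulas. \emph{Compatibility of the candidate cone}: for a morphism $h:(A,a)\to(B,b)$ in $X/\ca$ (so $h\ot a=b$), functoriality of $S$ gives $Sh\ot Sa=S(h\ot a)=Sb$. \emph{Coherence between the $S$-action and the product projections}: for any $a:X\to Y$ in $\ck$ and any $h:Y\to D_i$, unwinding the definition of $S=RL$ gives $\pi_h\ot Sa=\pi_{h\ot a}$; in particular, taking $Y=D_i$ and $h=\id_{D_i}$ yields $\pi_{\id_{D_i}}\ot Sf=\pi_f$ for every $f:X\to D_i$.

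For the universal property, uniqueness is easy: if $\phi,\phi':Z\to SX$ satisfy $Sa\ot\phi=Sa\ot\phi'$ for every $a\in X/\ca$, then specialising to $a=f:X\to D_i$ (which lies in $X/\ca$ because $D_i\in\ca$) and postcomposing with $\pi_{\id_{D_i}}$ yields $\pi_f\ot\phi=\pi_f\ot\phi'$ for every $f:X\to D_i$; the product structure of $SX=\prod_{i\in I}D_i^{\ck(X,D_i)}$ then forces $\phi=\phi'$. For existence, given a cone $(\gamma_a:Z\to SA)_{a\in X/\ca}$ I would define $\phi:Z\to SX$ by prescribing its projections,
$$\pi_f\ot\phi\;:=\;\pi_{\id_{D_i}}\ot\gamma_f\qquad\text{for each }f:X\to D_i,\ i\in I.$$
To check $Sa\ot\phi=\gamma_a$ for an arbitrary $a:X\to A$ in $X/\ca$, I would test against each projection $\pi_g:SA\to D_j$ with $g:A\to D_j$. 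The left-hand side becomes $\pi_g\ot Sa\ot\phi=\pi_{g\ot a}\ot\phi=\pi_{\id_{D_j}}\ot\gamma_{g\ot a}$, while applying cone compatibility to the morphism $g:(A,a)\to(D_j,g\ot a)$ of $X/\ca$ gives $Sg\ot\gamma_a=\gamma_{g\ot a}$, so the right-hand side equals $\pi_g\ot\gamma_a=\pi_{\id_{D_j}}\ot Sg\ot\gamma_a=\pi_{\id_{D_j}}\ot\gamma_{g\ot a}$, matching.

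There is no serious obstacle here; the essential use of the hypotheses is precisely that the cogenerators $D_i$ already belong to $\ca$. Without this inclusion the arrows $f:X\to D_i$ would not be indexed by $X/\ca$, and one could neither define $\phi$ componentwise via the product structure nor extract the required value $\gamma_f$ from an abstract cone. Once the two formulas above are in hand, the rest is a routine bookkeeping argument with product projections.
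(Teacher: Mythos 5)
Your proof is correct and follows essentially the same route as the paper: the key move in both is that each $f:X\to D_i$ is itself an object of $X/\ca$ (since $D_i\in\ca$), so the factorization is defined by evaluating the given cone at $f$ and projecting at the identity --- your $\pi_f\ot\phi=\pi_{\id_{D_i}}\ot\gamma_f$ is exactly the paper's $z(t)=z_t(\id_{D_i})$. The only cosmetic difference is that the paper first uses $S=R\ot L$ with $R$ limit-preserving to reduce the statement to a colimit of hom-sets in $\Set$ and then argues elementwise, whereas you run the same computation directly in $\ck$ with the product projections.
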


\begin{proof}Since $S=R\ot L$ and $R$ preserves limits, it is sufficient to prove that the diagram $L\ot C_X$  in $\big(\Set^I\big)^{\op}$ has the limit $\big(\ck(X,D_i)\big)_{i\in I}$ with the limit cone of all maps $(-)\ot a:\big(\ck(A,D_i)\big)_{i\in I}\to \big(\ck(X,D_i)\big)_{i\in I}$ for   $a:X\to A$. We can work with the components individually, thus, let $i\in I$ be fixed. Hence in $\Set$, rather than $\Set^{\op}$, we are to prove that the cocone
 $$\ck(A,D_i)\xrightarrow{(-)\ot a}\ck(X, D_i) \quad \quad \text{( $(a,A)\in X/\ca$)}$$
 is a colimit   of $\ck(E-,D_i):(X/\ca)^{\op}\to \Set$. Indeed, let 
 $$z_a:\ck(A,D_i)\to Z\quad \quad \text{( $(a,A)\in X/\ca$)}$$
 be  another cocone of $\ck(E-,D_i)$. Compatibility means that given a morphism in $X/\ca$
 $$\xymatrix{&X\ar[dl]_a\ar[dr]^b&\\
 A\ar[rr]_u&&B}$$
then $z_b(t)=z_a(tu)$ for all $t:B\to D_i$.
 The function 
 $$z:\ck(X,D_i)\to Z, \; \; \; z(t)=z_t(\id_{D_i}),$$
 for all $t:X\to D_i$, is the desired factorization.
 
 Indeed, the equality $z_a=z\ot \big((-)\ot a\big)$ means that 
 $$z_a(r)=z(r\ot a)=z_{ra}(\id_{D_i})\qquad \text{ for all $r:A\to D_i$}$$
 by choosing $t=\id_{D_i}$ and $u=r$ (thus $b=ra$).
 
 The uniqueness of $z$ is clear.
\end{proof}

 
 \begin{theorem}\label{T5.4} The codensity monad of the embedding $\ca\hookrightarrow \ck$ is the smallest submonad of $\mathbb{S}$ with the limit property.
 \end{theorem}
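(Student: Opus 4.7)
The plan is to establish three facts in order: (a) the codensity monad $\mathbb{T}$ is a submonad of $\mathbb{S}$; (b) $\mathbb{T}$ has the limit property; and (c) $\mathbb{T}$ embeds into every submonad of $\mathbb{S}$ with the limit property. Fact (b) is immediate from the limit formula for the codensity monad stated in the Introduction.

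For (a), Lemma~\ref{Llim} gives $SX = \lim SC_X$ with limit cone $(Sa)_{a \in X/\ca}$. I would define $\iota_X : TX \to SX$ as the unique morphism induced by the cone $\eta^S_A \cdot \psi_a : TX \to SA$ for $(A,a) \in X/\ca$; its compatibility over morphisms of $X/\ca$ follows from naturality of $\eta^S$ together with $\psi$ being a cone of $C_X$. Monicity of $\iota_X$ holds because each $\eta^S_A$ is monic (Remark~\ref{R5.1}) and the $\psi_a$ are jointly monic; naturality of $\iota$ in $X$ is immediate from the defining formula $\psi_a \cdot Tf = \psi_{af}$. Unit preservation reduces, after composing with the limit projections $Sa$, to $\psi_a \cdot \eta^T_X = a$ combined with naturality of $\eta^S$. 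The multiplication law is the one serious computation: the pivotal observation is that $\psi_a : TX \to A$ is itself an object of $TX/\ca$, so the defining equation of $\iota_{TX}$ specializes to $S\psi_a \cdot \iota_{TX} = \eta^S_A \cdot \psi_{\psi_a}$. Combined with naturality of $\mu^S$ and the triangle identity $\mu^S \cdot S\eta^S = \mathrm{id}$, this yields $\iota \cdot \mu^T = \mu^S \cdot S\iota \cdot \iota T$ upon comparing components against the limit cone of $SX$.

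For (c), let $j : \mathbb{T}' \to \mathbb{S}$ be a submonad with the limit property, so that $T'X = \lim T'C_X$ with cone $(T'a)_{a \in X/\ca}$. Mirroring the construction of $\iota$, the cone $\eta^{T'}_A \cdot \psi_a : TX \to T'A$ induces a unique $\rho_X : TX \to T'X$ with $T'a \cdot \rho_X = \eta^{T'}_A \cdot \psi_a$. A componentwise check gives
\[
Sa \cdot j_X \cdot \rho_X = j_A \cdot T'a \cdot \rho_X = j_A \cdot \eta^{T'}_A \cdot \psi_a = \eta^S_A \cdot \psi_a = Sa \cdot \iota_X,
\]
using naturality of $j$ and $j \cdot \eta^{T'} = \eta^S$; hence $j \cdot \rho = \iota$, and monicity of $\iota$ forces monicity of $\rho$. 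The monad-morphism identities for $\rho$ then follow from those for $\iota$ and $j$ by cancelling the monic $j$, the standard fact that a submonad inclusion reflects monad morphisms.

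The one delicate point, to my mind, is the multiplication-law verification in (a); every other step is formal from the universal properties of the two limits $SX = \lim SC_X$ and $T'X = \lim T'C_X$. The crucial trick is identifying $\psi_a : TX \to A$ as a morphism in $TX/\ca$ in order to specialize the defining equation of $\iota_{TX}$ to the object one actually needs.
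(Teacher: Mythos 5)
Your proposal is correct, but it follows a genuinely different route from the paper's proof. The paper obtains both the submonad claim and minimality by re-running the construction of Theorem \ref{T4.4} with an arbitrary monad having monic unit and the limit property in place of $(-)^{\aa}$: the codensity monad is realized inside $SX$ (and inside any submonad of $\mathbb{S}$ with the limit property) as the wide intersection of derived subobjects, i.e.\ pullbacks of the unit along the images of $a\in X/\ca$, so all verifications are delegated to the already proved theorem. You instead build the comparison morphisms directly from universal properties: $\iota_X$ induced by the cone $\eta^S_A\cdot\psi_a$ against $SX=\lim SC_X$ (Lemma \ref{Llim}), and $\rho_X$ induced by $\eta^{T'}_A\cdot\psi_a$ against $T'X=\lim T'C_X$, with $j\cdot\rho=\iota$ checked on the limit cone and the monad-morphism laws for $\rho$ obtained by cancelling the monic $j$ --- the same device the paper uses in Corollary \ref{C4.6}. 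Your key computation is the right one: composing with $Sa$, naturality of $\mu^S$, the law $\mu^S_A\cdot S\eta^S_A=\id$, and the defining equation of $\iota_{TX}$ at the object $(A,\psi_a)$ of $TX/\ca$ reduce both sides of the multiplication law to $\eta^S_A\cdot\psi_{\psi_a}$. What your route buys is a self-contained argument using only the limit-formula data ($\psi_a$, $\eta^T$, $\mu^T$) and the limit property of the target, with no pullbacks or wide intersections, and it makes the compatibility $j\cdot\rho=\iota$ of the two embeddings explicit; what the paper's route buys is brevity. One point you state too quickly: the limit property is not literally the limit formula --- it asserts $TX=\lim TC_X$ with cone $(Ta)$, not $TX=\lim C_X$ with cone $(\psi_a)$ --- so you should add that $\eta^T_A$ is invertible for all $A\in\ca$ (as in Observation \ref{O4.5}, whose proof works for any small full $\ca$), giving $Ta=\eta^T_A\cdot\psi_a$ and identifying the two cones up to the isomorphism with components $\eta^T_A$; the paper is equally terse here, so this is a remark rather than a gap.
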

 
 \begin{proof} (1) Let $\mathbb{M}$ be a monad on $\ck$ with the limit property and with a monic unit $\eta:\Id\to \mathbb{M}$. Looking at the proof of Theorem \ref{T4.4}, we see that it works for  $\ca\hookrightarrow \ck$  if, instead of the double-dualization monad $(-)^{\aa}$, we take the monad $\mathbb{M}$. Thus, the codensity monad can be obtained from $\mathbb{M}$ by using the intersection of derived subobjects analogous to that described in Definition \ref{D:derived} and  Lemma \ref{L4.2}. In particular, the codensity monad is a submonad of $\mathbb{M}$.
For $\mathbb{M}=\mathbb{S}$, we deduce that the codensity monad $\mathbb{T}$ is a submonad of $\mathbb{S}$.

 (2) Let $\mathbb{T}$ be the monad defined analogously to Theorem \ref{T4.4} with $\mathbb{S}$ replacing $(-)^{\aa}$ everywhere. Thus, 
 for every object   $X$,  $TX$ is the intersection of the preimages of $\eta_A^S$ (see Remark \ref{R5.1}) under $Sa$ for all $a:X\to A$ in $X/\ck$:
\begin{equation}\label{eqA0}\xymatrix{TX\ar[d]_{q(\bar{a})}\ar[rd]^{i_X}&\\
A_0\ar[r]^{a_0}\ar[d]_{p(a)}&SX\ar[d]^{Sa}\\
A\ar[r]_{\eta^S_A}&SA}
\end{equation}
 This defines a functor $T$, its action on morphisms  is defined precisely as in Lemma \ref{L4.2}.
 
 Then $\mathbb{T}$ is a submonad of $\mathbb{S}$ via the monad morphism $i: \mathbb{T}\to \mathbb{S}$ 
 with the above components $i_X$. 
 
 (3)  Moreover, this works in a entirely similar way for every submonad $\overline{\mathbb{S}}$ of $\mathbb{S}$ with the limit property, showing that the codensity monad is a submonad of any such $\overline{\mathbb{S}}$.
 
 Since the codensity monad   has the limit property,  the proof is concluded.
 \end{proof}

 \begin{example}\label{E5.5} Let $\ck$ be a locally finitely presentable  category  with a cogenerating set $(D_i)_{i\in I}$ in  $\ck_{\sfp}$.  Then the codensity monad  of the embedding of $\ck_{\sfp}$ into $\ck$ is the smallest submonad of the monad $SX=\prod_{i\in I}D_i ^{\ck(X,D_i)}$ with the limit property. This is actually quite analogous to the description of Section \ref{S4}, just the desired subobjects are now related to $\mathbb{S}$ rather than  $(-)^{\aa}$ (see the proof above). However, in the concrete situations of Section \ref{S4} the description using $\ast$-cogenerators is more illustrative. 
 \end{example}
 
 Given a $\ast$-cogenerator $D$, how is the present description related to that of Theorem \ref{T4.4}?  We would like to see the codensity monad  of Section \ref{S4} as a submonad of $\mathbb{S}$ with the limit property. For that we need $(-)^{\aa}$ to be a submonad of $\mathbb{S}$. This holds for the examples of Section \ref{S4}, as we are going to show.
 
 \begin{remark}\label{D-e} Let $\ck$ be a complete, symmetric monoidal closed category with a $\ast$-cogenerator $D$. 

(1) For every object $X$ the morphism $(\eta_X)^{\ast}: X^{\ast\ast\ast} \to X^{\ast}$ yields an algebra for the monad $(-)^{\aa}$. In particular, since $D$ is isomorphic to $I^{\ast}$, we obtain such an algebra $D$ that we denote by
 		$$e: D^{\aa} \to D.$$
That is, if $i: D \to I^{\ast}=[I,D]$ denotes the canonical isomorphism, then $e=i^{-1}\cdot  \eta_I^{\ast}\cdot i^{\aa}$.

 	(2) In the next result we assume  the morphisms $e\ot a^{\aa}$, $a\in X/\ck_{\sfp}$, to be  jointly monic. This holds in all our examples of Section 4. Indeed there we have
 	$$e=\eta_{D^{\ast}}(\id_D):D^{\aa}\to D.$$

 	(3) Denote by $\pi_a:SX \to D$ the projection of $SX = D^{\ck(X,D)}$  corresponding to $a:X\to D$.
We can define a unique morphism $m_X$ by the following commutative squares:
 $$\xymatrix{X^{\aa}\ar[d]_{a^{\aa}}\ar[r]^{m_X}&SX\ar[d]^{\pi_a}\\
D^{\aa}\ar[r]_{e}&D}$$
\end{remark}

\begin{lemma}
 	Let $\ck$ be a complete, symmetric monoidal closed category with a $\ast$-cogenerator $D$, and let $\A = \ck_{\sfp}$. Assuming that the morphisms $e\ot a^{\aa}$, $a\in X/\ck_{\sfp}$, are  jointly monic, then $(-)^{\aa}$ is a submonad of $\mathbb{S}$ via the monad morphism $m$.
\end{lemma}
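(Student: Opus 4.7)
The plan is to verify that each $m_X$ is monic and that $m$ is natural and compatible with the units and multiplications of the two monads. Throughout I exploit that $SY = D^{\ck(Y,D)}$ is a product, so its projections $\pi_a$ ($a:Y\to D$) are jointly monic; every equation of morphisms into $SY$ therefore reduces to checking it projection by projection. Monicity of each $m_X$ is immediate from the defining squares: $\pi_a\ot m_X = e\ot a^{\aa}$ for all $a:X\to D$, so the cone $(\pi_a\ot m_X)_a$ is precisely the family $(e\ot a^{\aa})_a$, which is jointly monic by hypothesis.

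For naturality of $m$, given $f:X\to Y$ I project both sides of $m_Y\ot f^{\aa} = Sf\ot m_X$ onto each $\pi_a$ with $a:Y\to D$: the left side becomes $e\ot a^{\aa}\ot f^{\aa} = e\ot (a\ot f)^{\aa}$ by functoriality of $(-)^{\aa}$, and the right side becomes $\pi_{a\ot f}\ot m_X = e\ot (a\ot f)^{\aa}$, using $\pi_a\ot Sf = \pi_{a\ot f}$ from Remark~\ref{R5.1}. For unit preservation, I check $m_X\ot \eta_X = \eta_X^S$ by projecting onto $\pi_a$: the left side is $e\ot a^{\aa}\ot \eta_X = e\ot \eta_D\ot a$ by naturality of $\eta$, and this equals $a = \pi_a\ot \eta_X^S$ using the algebra unit law $e\ot \eta_D = \id_D$ on $D$.

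Multiplication preservation, $m_X\ot \mu_X = \mu_X^S\ot S(m_X)\ot m_{X^{\aa}}$, is the main step. Projecting onto $\pi_a$, the left side becomes $e\ot a^{\aa}\ot \mu_X = e\ot \mu_D\ot a^{\aa\aa}$ by naturality of $\mu$, and the algebra associativity law $e\ot \mu_D = e\ot e^{\aa}$ rewrites it as $e\ot (e\ot a^{\aa})^{\aa}$. The right side uses the formulas $\pi_a\ot \mu_X^S = \pi_{\pi_a}$ (Remark~\ref{R5.1}) and $\pi_{\pi_a}\ot S(m_X) = \pi_{\pi_a\ot m_X} = \pi_{e\ot a^{\aa}}$, giving $\pi_{e\ot a^{\aa}}\ot m_{X^{\aa}} = e\ot (e\ot a^{\aa})^{\aa}$, matching. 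The one conceptually nontrivial ingredient is the pair of algebra laws $e\ot \eta_D = \id_D$ and $e\ot \mu_D = e\ot e^{\aa}$; these hold because $e$ is obtained (via the isomorphism $i:D\to I^{\ast}$) from $\eta_I^{\ast}$, and every object of the form $X^{\ast}$ carries a canonical Eilenberg--Moore structure $\eta_X^{\ast}$ by Remark~\ref{D-e}(1). Once these algebra axioms are invoked, no serious obstacle remains and the rest is a bookkeeping verification.
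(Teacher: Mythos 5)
Your proof is correct and follows essentially the same route as the paper: you verify monicity, naturality, and the unit and multiplication laws by projecting everything onto the product projections $\pi_a$ of $SX$, using the defining squares $\pi_a\ot m_X=e\ot a^{\aa}$, the formulas for $\eta^S$, $\mu^S$ and $S$ on morphisms, and the Eilenberg--Moore algebra laws for $e$ from Remark~\ref{D-e}(1). If anything, you are slightly more explicit than the paper at the multiplication step, where the left-hand region of the paper's diagram (naturality of $\mu$ together with $e\ot\mu_D=e\ot e^{\aa}$) is left unremarked but is exactly the ingredient you supply.
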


 \begin{proof} We use the notation $\big((-)^{\aa}, \mu, \eta\big)$ and $\big(\mathbb{S}, \mu^S, \eta^S\big)$ for the corresponding monad structures.
 
 (i) Naturality of $m$ is seen from the following diagram  where $a$ ranges over $\ck(A,D)$: 
  $$\xymatrix{X^{\aa}\ar[dd]_{h^{\aa}}\ar[dr]^{(a\ot h)^{\aa}}\ar[rrr]^{m_X}&&&SX\ar[dl]_{\pi_{(a\ot h)}}\ar[dd]^{Sh}\\
  &D^{\aa}\ar[r]_{e}&D&\\
Y^{\aa}\ar[ur]_{a^{\aa}}\ar[rrr]_{m_Y}&&&SY\ar[ul]^{\pi_a}}$$
 The right-hand triangle is the definition of $Sh$.
 
  (ii) Each $m_X$ is monic. This is clear since the cone of all $e\ot a^{\aa}$ is monic.

 (iii) $m$ preserves units. The unit $\eta^S$ of $S$ has components $\eta_X^S:X\to D^{\ck(X,D)}$ defined by 
 $$\pi_a\ot \eta_X^S=a \quad \quad \text{ for all $a:X\to D$.}$$
 Thus, we obtain the following commutative diagram
  $$\xymatrix{&&X^{\aa}
  \ar[d]_{a}\ar[dddrr]^{\eta_X^S}\ar[dddll]_{\eta_X}&&\\
  &&D\ar[dl]_{\eta_D}\ar@{=}[rd]&&\\
  &D^{\aa}\ar[rr]_{e}&&D&\\
X^{\aa}\ar[ur]_{a^{\aa}}\ar[rrrr]_{m_X}&&&&SX\ar[ul]^{\pi_a}}$$
 
 (iv)
  To prove that $m$ preserves multiplication, consider the following diagram:
  $$\xymatrix{X^{\aa\aa}\ar[rr]^{m_{X^{\aa}}}
  \ar[dd]_{\mu_X}
 \ar[rd]^{(\pi_a\ot m_X)^{\aa}}
 &&X^{\aa}\ar[r]^{Sm_X}\ar[d]_{\pi_{\pi_a\ot m_X}}\ar[r]^{Sm_X}&SSX\ar[dd]^{\mu_X^S}\ar[ld]^{\pi_{\pi_a}}\\
  &D^{\aa}\ar[r]_{e}&D&\\
  X^{\aa}\ar[ru]^{a^{\aa}}\ar[rrr]_{m_X}&&&SX\ar[lu]_{\pi_a}}$$
The upper left-hand part and the lower part commute due to the definition of  $m$.  The right-hand upper triangle expresses the definition of $S$ on morphisms,  and the lower one commutes due to Remark \ref{R5.1}.  Therefore, the outside square commutes.
 \end{proof}

\begin{example}\label{E5.7} Let $\ck=\Set$ and $\ca=\Set_{\lambda}$, sets of power less than $\lambda$.

(a) Leinster observed in \cite{L} that the ultrafilter monad is the codensity monad  of $\Set_4\hookrightarrow \Set$ (sets of at most 3 elements). In contrast, $\Set_3\hookrightarrow \Set$ has the codensity monad  defined by
$$TX=\text{ collections of  nonempty subsets of $\cp X$ including either $Y$ or $\overline{Y}$ for every $Y\subseteq X$.}$$

(b) For every infinite cardinal $\lambda$ let $\mathbb{U_{\lambda}}$ be the submonad of the ultrafilter monad $\mathbb{U}$ of all $\lambda$-complete ultrafilters $\mathcal{F}$. Recall that this means that in every disjoint decomposition $e:X\twoheadrightarrow A$ with $|A|<\lambda$  one component lies in $\mathcal{F}$. 

 The codensity monad  of $\Set_{\gl}\hookrightarrow \Set$ is the submonad $\mathbb{U}_{\lambda}$  of the ultrafilter monad $\mathbb{U}$ on all $\gl$-complete ultrafilters, see \cite{ABCPR}. 
\end{example}

\begin{remark}\label{R5.8} Recall that a cardinal $\lambda$ is measurable if there exists a non-principle $\lambda$-complete ultrafilter. $\Set_{\lambda}$ is codense in $\Set$ (i.e., has the trivial codensity monad  $\Id$) iff $\lambda$ is not measurable. This was proved by Isbell in \cite{I}.
\end{remark}

\begin{example}\label{E5.9} Let $\ck=K$-$\Vec$ and $\ca=K$-$\Vec_{\gl}$, spaces of dimension less than $\gl$.

(a) If $\gl$ is an infinite cardinal, then the codensity monad  is analogous to the above example of $\Set_{\gl}\hookrightarrow \Set$, see \cite{ABCPR}. A vector $x$ in $X^{\aa}$ is called \textit{$\gl$-complete} if for every linear decomposition $e:X\to A$ with dim$A<\gl$, we have $e^{\aa}(x)\in\eta_A[A]$. All $\gl$-complete vectors form a submonad of $(-)^{\aa}$. And this is the codensity monad  of $K$-$\Vec_{\gl}\hookrightarrow K$-$\Vec$.

(b) For $\ca$ consisting of $K$ alone the codensity monad  is larger than $(-)^{\aa}$: it assigns to $X$  the space of all \textit{homogeneous} functions from $X^{\ast}$ to $K$ (i.e., those preserving the scalar multiplication). More precisely, $T$ is the subfunctor of $SX=K^{X^{\ast}}$ given by
$$TX=\text{ all homogeneous functions in $K^{X^{\ast}}$.}$$
Indeed, the diagram $C_X$ given by $(X\xrightarrow{a}K)\mapsto K$ has the cone $\pi_a: TX\to K$ formed by restrictions of the projections of $K^{X^{\ast}}$. That is,
$$\pi_a(h) =h(a) \quad \text{ for $h\in TX$, $a\in X^{\ast}$.}$$
To prove that this is a limit cone, let another cone with domain $Z$ be given:
$$\begin{tabular}{cc}
$X\xrightarrow{a}K$\\
\hline \\ [-2.5ex]
$Z\xrightarrow{\ba}K$
\end{tabular}
$$
It is compatible, therefore, for every scalar $\gl\in K$ the morphism  $\gl\ot (-):a\to \gl a$ of $X/\{K\}$ yields 
$$\gl \ot \ba=\overline{\gl \ot a}.$$
Consequently, we can define a function $r: Z\to TX$ by taking $z\in Z$ and putting 
$$r(z):a\mapsto \ba(z) \quad \text{ for $a\in X^{\ast}$.}$$
Then $r(z)$ is homogeneous. This is the desired factorization: $r$ is a linear function with 
$$\pi_a\ot r=\ba \quad \text{ for all $a\in X^{\ast}$.}$$
And it is clearly unique.

(2) In contrast, for $\ca=\{K,\, K^2\}$ in $K$-$\Vec$, the codensity monad  is $(-)^{\aa}$. Indeed, given a cone of $C_X$
$$\begin{tabular}{cc}
$X\xrightarrow{a}K^i$\\
\hline \\ [-2.5ex]
$Z\xrightarrow{\ba}K^i$
\end{tabular}
\qquad \qquad  (i=1,2)
$$
then we again define $r$ by $r(z):a\mapsto \ba(z)$ for $a\in X^{\ast}$. We have to verify that each $r(z)$ is linear, the rest is as above. Homogeneity is verified as before.

To prove additivity,
$$\overline{a_1+a_2}=\overline{a_1}+\overline{a_2} \quad \text{ for $a_1, a_2 \in X^{\ast}$}$$
consider the projections as morphisms  
$$\pi_i:(K^2,\langle a_1,a_2\rangle ) \to (K,a_i) \qquad (i=1,2)$$
of $X/\ca$ which by compatibility yield
$$\pi_i\ot \overline{\langle a_1,a_2\rangle}=\overline{a_i}.$$
That is,
$$\overline{\langle a_1,a_2\rangle }=\langle \overline{a_1}, \overline{a_2}\rangle .$$
We also have a morphism 
$$\pi_1+\pi_2:(K^2,\langle a_1,a_2\rangle)\to (K,a_1+a_2)$$
therefore
$$(\pi_1+\pi_2)\ot \overline{\langle a_1,a_2\rangle}=\overline{a_1+a_2}.$$
Since $(\pi_1+\pi_2)\ot \langle \overline{a_1}, \overline{a_1}\rangle =\overline{a_1}+\overline{a_2}$, the proof is complete.
\end{example}

\begin{example}\label{Top}
Let $\ck=\Top$, the category of topological spaces and continuous maps, and  $\ca=\Top_{\mathrm{f}}$ consist of all finite spaces. The corresponding codensity monad  $\mathbb{T}$ is, as for sets, the ultrafilter monad. More precisely, for every space $X$, $TX$ is the set of all ultrafilters on the underlying set of $X$ with the topology $\tau$ having as a basis all sets of the form
$$\triangle  G=\{ \cu\in TX \mid G\in \cu\}, \quad G \, \text{ open in $X$.}$$

To see this, let $D=\{0,1\}$ be the indiscrete space. This is a cogenerator of $\Top$, and the  space $SX=D^{\Top(X,D)}$ is the indiscrete space $\cp\cp X$ of all collections of subsets of $X$. The proof  that the ultrafilters on the underlying set of a topological space $X$ coincide with $D$-ultrafilters on $X$ is completely analogous to that of Lemma \ref{L3.3}. 

 
To verify that $\tau$  is the topology of $TX$, we just need to show that $\tau$ makes all the morphisms $q(a)$ (see diagram \eqref{eqA0} of Theorem \ref{T5.4}) continuous and jointly initial. That is, $\tau$ is the coarser topology making all $q(a)$ continuous. Indeed,  the open sets of  $A_0$ are of the form 
 $$\hat{H}=\{\cu\in SX \mid  a^{-1}(H)\in \cu\} \qquad \text{for $H$ an open set of $A$,}$$
and $(q(a))^{-1}(\hat{H})=\triangle a^{-1}(H)$. The initiality follows immediately, since, for every open set $G$ of $X$, $\triangle G=\triangle  \chi_G^{-1}(\{1\})$ for $\chi_G$ the characteristic function into the Sierpinski space. 
\end{example}

\begin{example}\label{Top0}
Let $\ck=\Top_0$, the category of $T_0$-topological spaces and continuous maps, and  $\ca$ consist of the finite spaces. The corresponding codensity monad  is the monad of prime open filters. More precisely, for every space $X$, $TX$ is the set of all prime filters on the poset $\Omega X$ of open sets   with the topology having as a basis all sets of the form
$$\square  G=\{ \cu\in TX \mid G\in \cu\}, \quad G \, \text{ open in $X$.}$$
The proof is analogous to the one for posets, using as cogenerator the Sierpinski space.
\end{example}

\end{document}